\documentclass[11pt, a4paper]{amsart}
\usepackage[a4paper]{geometry}
\usepackage[utf8]{inputenc}
\usepackage{amsmath}
\usepackage{amssymb}
\usepackage{amsthm}
\usepackage{enumerate}
\usepackage{graphicx}
\usepackage{import}
\usepackage{xifthen}
\usepackage{pdfpages}
\usepackage{tikz} 
\usepackage{tikz-cd} 
\usepackage{mathabx}
\usepackage{hyperref}
\usepackage{thmtools, thm-restate}

\usepackage[foot]{amsaddr}

\usetikzlibrary{shapes.geometric}
\usetikzlibrary{graphs,graphs.standard}
\usetikzlibrary{fit}
\usetikzlibrary{calc}

\geometry{includehead, includefoot,left=1in,right=1in, top=1in, bottom=1in}

\usepackage[giveninits]{biblatex}
\addbibresource{bibliography.bib}

\theoremstyle{plain}
\newtheorem{theorem}{Theorem}[section]
\newtheorem{claim}{Claim}[theorem]

\newtheorem{lemma}[theorem]{Lemma}
\newtheorem{corollary}[theorem]{Corollary}

\theoremstyle{definition}
\newtheorem{definition}[theorem]{Definition}
\newtheorem{example}[theorem]{Example}
\newtheorem{proposition}[theorem]{Proposition}
\newtheorem{question}[theorem]{Question}
\newtheorem{conjecture}[theorem]{Conjecture}

\theoremstyle{remark}

\newcommand{\Z}{\mathbb{Z}}

\newcommand{\Ps}{\operatorname{PStab}}

\title{Quasi-convex Splittings of Acylindrical Graphs of Locally finite height Groups}
\author{William D Cohen}
\address{Centre for Mathematical Sciences, University of Cambridge, 
Cambridge, CB3 0WA}
\email{wdc26@cam.ac.uk}

\AtBeginBibliography{\small}

\begin{document}

\begin{abstract}
    We find a condition on the acylindrical action of a finitely presented group on a simplicial tree which guarantees that this action will be dominated by an acylindrical action with finitely generated edge stabilisers, and find the first example of an action of a finitely presented group where there is no such dominating action. As a consequence, we show that any finitely presented group that admits a decomposition as an acylindrical graph of (possibly infinitely generated) free groups is virtually compact special, and that every finitely generated subgroup of a one-relator group with an acylindrical Magnus hierarchy is virtually compact special.
\end{abstract}

\maketitle

\section{Introduction}
Let $G$ be a one-relator group. One of the oldest and most effective tools for studying such groups is a \emph{Magnus Hierarchy}, an iterative HNN-structure on $G$ whose vertex groups are one-relator groups and whose edges are free subgroups generated by subsets of the generators of the vertices (c.f. \ref{def:Magnus}). In his paper \cite{Linton25}, Linton proved that if a Magnus hierarchy for $G$ is \emph{acylindrical} (c.f. \ref{Def:introkc}) then $G$ is hyperbolic and the hierarchy is \emph{quasi-convex} (c.f. \ref{def:qc}), a much stronger condition. 

Now assume that $G$ has such an acylindrical hierarchy, and let $H$ be a finitely generated subgroup of $G$. By coherence of $G$ \cite[Theorem~1.1]{JaikinLinton25}, Gersten's theorem \cite[Theorem~5.4]{Gersten96}, and the fact that one-relator groups have virtual cohomological dimension at most two \cite[Corollary~9]{Howie84}, the group $H$ is hyperbolic, so a natural question to ask is whether $H$ will itself admit a quasi-convex hierarchy. Indeed, $H$ will always admit an acylindrical hierarchy induced by the hierarchy for $G$, but this need not be quasi-convex, the most immediate issue being that we cannot assume that it has finitely generated edge stabilisers --- see Section~\ref{sec:Applications} for an example of such a $G$ and $H$ where the induced hierarchy on $H$ has infinitely generated edge stabilisers. In this paper we prove that even in such a case, we may construct a quasi-convex hierarchy for $H$.

The definition of an acylindrical action on a tree was first formulated by Sela \cite{Sela97}, and later generalised by Weidmann \cite{Weidmann12} to the following. 

\begin{definition}\label{Def:introkc} Let $G$ be a group acting by simplicial isometry on some simplicial tree $T$ and let $k\geq0$ and $C>0$ be integers. We say that the action of $G$ on $T$ is $(k, C)$\emph{-acylindrical} if the pointwise stabiliser of any edge path in $T$ of length at least $k$ contains at most $C$ elements.
\end{definition}
We say that a group $G$ is \emph{acylindrically arboreal} if $G$ admits an acylindrical action on a tree that is \emph{non-elementary} (c.f. Theorem~\ref{Thm:categorisedActions}). Acylindrical arboreality is a special case of acylindrical hyperbolicity, a well studied and powerful generalisation of hyperbolicity that has been of much interest in recent years \cite{MinasyanOsin15, Osin16,  Osin17}, and was the first such generalisation to encompass mapping class groups of hyperbolic surfaces (\cite{Bowditch08,MasurMinsky99}, see also \cite[Section~8]{Osin16}).

A concept strongly related to an acylindrical action on a tree is that of a \emph{quasi-convex splitting}, an action of a hyperbolic group $G$ on a simplicial tree where the edge stabilisers are all quasi-convex in $G$ (c.f Definition~\ref{def:qc}). A quasi-convex splitting with finitely many orbits of edges will always be acylindrical by a result of Gitik, Mj, Rips and Sageev \cite[Main Theorem]{GitikMahanRipsSageev98} that quasi-convex subgroups have \emph{finite height} (c.f. Definition~\ref{def:finheight}), meaning that in an action on a tree with quasi-convex edge groups any large enough set of edges will have a finite stabiliser. The converse is more difficult, and is the subject of the following conjecture that can be found in question form in \cite{BestvinaList}.

\begin{conjecture} \label{CONJECTURE}
    Let $G$ be a hyperbolic group, and $H$ a finitely generated subgroup of $G$ with finite height. Then $H$ is quasi-convex in $G$.
\end{conjecture}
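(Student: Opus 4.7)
The natural plan is to analyse the limit set $\Lambda(H) \subset \partial G$ of $H$ in the Gromov boundary of $G$, and attempt to show that $H$ acts geometrically on the weak convex hull of $\Lambda(H)$ in the Cayley graph of $G$; this would yield quasi-convexity. I would argue by contradiction, assuming $H$ is finitely generated of finite height but not quasi-convex, and so extracting a sequence of pairs $(h_n, h_n') \in H \times H$ whose $G$-geodesics $[h_n, h_n']$ have midpoints $m_n$ with $d_G(m_n, H) \to \infty$.

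The heart of the argument would be to convert this geometric failure into an algebraic contradiction with finite height. Consider the translates $m_n^{-1} H$: $\delta$-thinness of triangles in the hyperbolic group $G$ forces long fellow-travelling between distinct $H$-translates passing near the identity. The aim would then be, by a Ramsey-type extraction together with an application of the Morse lemma, to show that for infinitely many indices $n$ one obtains genuinely distinct conjugates of $H$ all containing a common infinite subgroup --- for instance a cyclic subgroup generated by a loxodromic element whose axis lies in the fellow-travelling region between $H$ and $m_n^{-1} H$ --- contradicting the definition of finite height.

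The main obstacle is precisely this final conversion step: upgrading ``coarse overlap'' of cosets to genuine algebraic intersection of conjugates. The finite-height hypothesis controls elements that honestly fix a long path, whereas the geometric argument above produces only group elements whose translates remain $\delta$-close to a long path; closing this gap is the central difficulty, and is why the conjecture remains open in its full generality. Possible lines of attack include Mitra's theory of Cannon--Thurston maps applied to the inclusion $H \hookrightarrow G$ to describe $\Lambda(H)$ directly, Bowditch-style peripheral boundary arguments, or reducing via the Rips decomposition and JSJ theory to combine a one-ended core analysis with induction across a hierarchy --- indeed the latter strategy is essentially the one the present paper pursues in its context of acylindrical splittings.
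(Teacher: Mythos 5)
This statement is Conjecture~\ref{CONJECTURE} in the paper, drawn from Bestvina's problem list; the paper does not prove it and explicitly treats it as open, only using it as a hypothesis in Theorem~\ref{thm:mainintro} and Corollary~\ref{cor:intro} to upgrade finite-height splittings to quasi-convex ones. So there is no proof in the paper to compare your proposal against, and you were right not to claim one.

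Your sketch is an accurate account of the standard line of attack and, more importantly, of where it breaks. The geometric setup (failure of quasi-convexity giving midpoints $m_n$ with $d_G(m_n,H)\to\infty$, hence long near-coincidences between $H$ and translates $m_n^{-1}H$) is correct, and the pivotal obstruction you name --- that finite height controls exact intersections of conjugates, whereas hyperbolic thin-triangle arguments only produce $\delta$-coarse overlaps of cosets, so there is no a priori reason an actual nontrivial element lies in the intersection of many essentially distinct conjugates --- is precisely why no one has closed this. A useful further caution: the intermediate step of extracting a loxodromic whose axis lies in the fellow-travelling region, and then showing a power of it lies in all the relevant conjugates, is itself not automatic; in general a quasi-geodesic shared coarsely by several cosets need not be the orbit of any single group element, and Ramsey-type extraction gives you combinatorial coherence of the overlaps but not algebraic membership. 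The alternative routes you mention (Cannon--Thurston, JSJ/hierarchy reduction) are also the ones discussed in the literature, and the hierarchy reduction is indeed the spirit in which the present paper uses the conjecture, but none of them is known to dispose of the one-ended base case. In short: the proposal is a reasonable survey of the problem, correctly flags the gap as unfilled, and should not be read as a proof; the paper likewise makes no claim to prove this conjecture.
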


Our opening question is a special case of the following natural question.

\begin{question} \cite[Question~1.7]{Cohen25}\label{Q1}
    Let $G$ be a hyperbolic group with a non-elementary acylindrical action on a tree. Does $G$ admit a quasi-convex splitting?
\end{question}

 One of the main reasons for the significance of quasi-convex splittings is their link to virtual specialness of hyperbolic groups. In particular, in \cite[Theorem~13.1]{WiseQCBook}, Wise proves that if a hyperbolic group admits a quasi-convex splitting with virtually compact special vertex stabilisers then it itself is virtually compact special. Virtual specialness is an important concept in modern geometric group theory, and its definition along with the fact that it is inherited from vertex groups in a quasi-convex splittings was instrumental in proving the Virtual Haken Conjecture, one of the most influential group theoretic results in recent years, see \cite{Agol13}. 
 
 In general, finding quasi-convex splittings of a hyperbolic group can be very challenging. On the other hand, acylindricity is a \emph{a priori} much weaker condition as we will see in Section~\ref{sec:fhqc}, and so a positive answer to Question~\ref{Q1} would be useful in this respect. 

Let $G$ be a group, and let $T$ and $T'$ be simplicial trees on which $G$ acts by simplicial isometry. We say that the action of $G$ on $T'$ \emph{dominates} the action on $T$ if there exists a $G$-equivalent graph epimorphism $\phi:T'\rightarrow T$. Using methods that rely on constructing dominating actions, Question~\ref{Q1} has been answered in the affirmative for many important classes of hyperbolic groups, including:

\begin{enumerate}
    \item The fundamental groups of closed and orientable hyperbolic $3$-manifolds \cite[Theorem~1.8]{Cohen25}; and
    \item Free groups, and more generally hyperbolic locally quasi-convex groups such as hyperbolic limit groups \cite[Corollary~3.12]{Wilton08} and many small cancellation groups \cite{McCammondWise05, McCammondWise08}.
\end{enumerate}

As stated above, the first obstacle to overcome when attempting to construct quasi-convex splittings from acylindrical ones is that a generic acylindrical splitting need not have finitely generated stabilisers. In \cite{Cohen25}, this problem was solved for hyperbolic $3$-manifold groups using a construction of Stallings, Epstein and Waldhausen which takes a splitting of such a group and outputs a collection of embedded surfaces one may cut along to obtain a new dominating action on a tree. This construction was proved to preserve acylindricity (and indeed promote it to quasi-convexity) in \cite{Cohen25} using the Subgroup Tameness Theorem for hyperbolic $3$-manifolds (see \cite[Theorem~5.2]{AschenbrennerFriedlWilton12}, for example).

A similar but weaker method exists in general for finitely presented groups. In particular, there is the construction due to Dunwoody and Sageev known as the \emph{Dunwoody--Sageev resolution} \cite[Lemma~2.2]{DunwoodySageev99} that takes as input an action of a finitely presented group on a tree and outputs a dominating action with finitely generated edge stabilisers that are in some sense geometric, in that they arise as one-dimensional subspaces of a Cayley complex. For a locally quasi-convex group, the actions produced by the Dunwoody--Sageev resolution will always have quasi-convex edge stabilisers, answering Question~\ref{Q1} in this case.

The main technical result of this paper exhibits a large class of actions in which acylindricity is preserved by the Dunwoody--Sageev Resolution, and indeed where the resolution produces finite height splittings that dominate the original acylindrical splitting. We say that a group $G$ is \emph{locally finite height} if every finitely generated subgroup of $G$ has finite height in $G$, and that an action of $G$ on a simplicial tree $T$ is \emph{finite height} if every edge stabiliser of this action has finite height in $G$. The following is our main theorem.

\begin{theorem} \label{thm:mainintro}
Let $G$ be a finitely presented group with bounded finite subgroups that acts minimally, acylindrically and non-elementarily on a tree $T$. 
    
    \begin{enumerate}[a.]
        \item If all vertex stabilisers of the action of $G$ on $T$ are locally finite height then $G$ admits a non-elementary finite height acylindrical splitting with finitely generated edge stabilisers that dominates the action of $G$ on $T$. Thus, assuming Conjecture~\ref{CONJECTURE}, if $G$ is hyperbolic then $G$ admits a quasi-convex splitting that dominates the action of $G$ on $T$.
        \item If there exists an edge stabiliser that is finite height acylindrical in $G$, and this edge stabiliser is locally finite height, then $G$ admits a non-elementary finite height acylindrical splitting with finitely generated edge stabilisers. Furthermore, if every edge stabiliser satisfies this condition then we may assume that the new action dominates the action of $G$ on $T$. Thus, assuming Conjecture~\ref{CONJECTURE}, if $G$ is hyperbolic then $G$ admits a quasi-convex splitting that dominates the action of $G$ on $T$.
    \end{enumerate}
\end{theorem}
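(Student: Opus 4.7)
The plan is to apply Dunwoody's resolution \cite{DunwoodyResolution} to the acylindrical action on $T$. Since $G$ is finitely presented, this construction produces a simplicial $G$-tree $T'$ and a $G$-equivariant surjection $\phi \colon T' \to T$ whose edge stabilisers are finitely generated. A loxodromic element for $T$ cannot fix any point of $T'$ (else its image would fix a point of $T$), so loxodromicity transfers through $\phi$ and the action on $T'$ is non-elementary; after passing to the unique minimal invariant subtree, I obtain a minimal action that dominates $T$. The substantive work is then to verify that each edge stabiliser of $T'$ is finite-height in $G$.

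Let $H = G_{e'}$ be an edge stabiliser in $T'$. Since $\phi(e')$ is either a vertex or an edge of $T$, the subgroup $H$ is contained in some vertex stabiliser $G_v$ of $T$; as $G_v$ is locally finite-height and $H$ is finitely generated, $H$ has finite height $N_v$ in $G_v$. To upgrade to finite height in $G$, consider essentially different conjugates $g_1 H g_1^{-1}, \dots, g_n H g_n^{-1}$ with infinite intersection $K$. The infinite subgroup $K$ pointwise stabilises the convex hull of $\{g_i v\}$, which by acylindricity has diameter strictly less than the acylindricity constant $k$. Grouping the $g_i$ by the vertex $g_i v$: within each group the $g_i$ differ by elements of $G_v$, essentially distinct in $G$ is preserved in $G_v$, and finite height in $G_v$ bounds the size of each group by $N_v$.

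The hard part will be bounding the number of $G$-orbit representatives of $v$ lying in a diameter-less-than-$k$ subtree of $T$ while all being pointwise fixed by a common infinite subgroup. My plan here is to study $\operatorname{Fix}(K)$, using acylindricity to bound its diameter and the bounded finite subgroups hypothesis to control the edge-by-edge pointwise stabilisers inside it. Combined with local finite-height at each vertex of $T$, a pigeonhole or inductive argument on the local geometry of $\operatorname{Fix}(K)$ should give the required uniform count, completing part (a). The hyperbolic corollary then follows by invoking Conjecture~\ref{CONJECTURE}, which promotes finitely generated finite-height subgroups of a hyperbolic group to quasi-convex ones.

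For part (b), when a single edge stabiliser $G_e$ is finite-height in $G$ and locally finite-height, I would collapse all edges of $T$ outside $G\cdot e$ to obtain a one-edge splitting along $G_e$, apply the Dunwoody resolution to that splitting, and use transitivity of finite height through the chain $H \leq G_e \leq G$ (with local finite-height of $G_e$ providing $H$ with finite height in $G_e$, and finite height of $G_e$ in $G$ doing the rest via a coset-partition argument analogous to the one above) to conclude. When every edge stabiliser of $T$ satisfies the hypothesis, the same Dunwoody resolution applied directly to $T$ produces a dominating action, and the per-edge chain argument supplies finite height of every new edge stabiliser in $G$ without needing to know anything extra about the vertex groups of $T$.
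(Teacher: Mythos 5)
Your overall strategy for part (a) — apply the Dunwoody resolution, pass to the minimal subtree, and then verify that edge stabilisers of $T'$ have finite height in $G$ — is in the same spirit as the paper's. However, the step you yourself flag as ``the hard part'' is a genuine gap, and I do not believe your proposed plan to close it can work. You need to bound the number of distinct vertices among $g_1v,\dots,g_nv$ lying in the (diameter $<k$) fixed subtree of the infinite group $K$ in $T$. But that fixed subtree can contain infinitely many vertices in the $G$-orbit of $v$: it has bounded diameter, not bounded cardinality, and a vertex of $T$ can have infinite degree inside $\operatorname{Fix}(K)$. The natural way to bound the degree would be to use finite height of edge stabilisers of $T$ in the adjacent vertex groups — but the hypotheses of part (a) say nothing about the edge stabilisers of $T$, which may well be infinitely generated (indeed that is the whole point of the theorem), so the local finite-height of $G_v$ does not apply to them. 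The ``bounded finite subgroups'' hypothesis is of no help either: the edge stabilisers in question are infinite. Without this bound your pigeonhole argument never gets started, and I do not see how an ``inductive argument on the local geometry of $\operatorname{Fix}(K)$'' could produce a uniform bound.

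The paper avoids this difficulty by never trying to bound orbit counts inside fixed subtrees of $T$. Instead it first proves that the action on $T'$ is \emph{acylindrical}, by considering a geodesic path $P$ in $T'$ of length at least $k'$ and exploiting the key fact — true for paths but false for general subtrees — that $|\phi^{-1}(v)\cap P|\geq\deg_{\phi(P)}(v)/2$ for every vertex $v$ of $\phi(P)$. Because $\phi(P)$ is a tree of diameter $<k$, it either has a vertex of degree $\geq 2k_{\mathrm{vert}}$ or has at most $(2k_{\mathrm{vert}})^k$ vertices, and in either case some $v\in\phi(P)$ has at least $k_{\mathrm{vert}}$ preimages on $P$. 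Pigeonholing these preimages over the finitely many $G_v$-orbits in $\phi^{-1}(v)$, and using that each such preimage stabiliser $\Ps_{G_v}(u)$ is finitely generated (hence of finite height $n_u$ in $G_v$ by local finite-height), gives a finite pointwise stabiliser for $P$. Having established acylindricity of $T'$, the finite-height property then follows from the (unlabelled) lemma in Section~2.1 characterising when an acylindrical splitting is finite-height in terms of heights of edge groups in neighbouring vertex groups. Your plan for part (b), by contrast, looks sound: finite height is indeed ``transitive'' in the quantitative sense (if $h_{G_e}(H)<\infty$ and $h_G(G_e)<\infty$ then $h_G(H)\leq h_{G_e}(H)\cdot h_G(G_e)$ by partitioning cosets of $H$ according to their coset of $G_e$), and this is essentially the two-case pigeonhole the paper carries out for sets of edges in $T'$. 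If you want to repair part (a), I would suggest replacing the fixed-set argument with the path-in-$T'$ argument sketched above.
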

We present several applications of this theorem. First, all countable free groups are locally finite height, and thus every finitely presented group that is an acylindrical graph of free groups will admit a finite height splitting with finitely generated edge stabilisers. Indeed, finitely generated free groups are known to be locally quasi-convex by Hall's Theorem \cite{Hall_1949}, and so when dealing with hyperbolic graphs of free groups one may often obtain quasi-convexity without having to appeal to Conjecture~\ref{CONJECTURE}. The following corollary follows immediately from Theorem~\ref{thm:mainintro}(a) and an application of the Quasi-Convex Combination Theorem of Kapovich \cite[Theorem~1.2]{Kapovich01}, which states that if a group $G$ acts acylindrically on a tree with each vertex stabiliser hyperbolic and each edge stabiliser quasi-convex in its neighbouring vertex stabilisers, then $G$ is hyperbolic each edge stabiliser will be quasi-convex in $G$.

\begin{corollary}\label{cor:immediateIntro}
    Let $G$ be a finitely presented group, and assume that $G$ acts acylindrically on some tree $T$ such that the vertex stabilisers of this action are free. Then $G$ is hyperbolic and admits a quasi-convex splitting with free vertex groups, and so $G$ is virtually compact special by \cite[Theorem~13.1]{WiseQCBook}.
\end{corollary}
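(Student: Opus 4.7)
The plan is to apply Theorem~\ref{thm:mainintro}(a) to the action of $G$ on $T$ to obtain a dominating finite-height splitting, then to promote its edge stabilisers to quasi-convex subgroups of $G$ via Kapovich's Quasi-Convex Combination Theorem. First I would verify the hypotheses of Theorem~\ref{thm:mainintro}(a). Since $G$ is hyperbolic it is finitely presented and has bounded finite subgroups; moreover, every torsion element of $G$ fixes a vertex of $T$ by Serre's lemma, so $G$ is torsion-free (its vertex stabilisers being free). After passing to the minimal $G$-invariant subtree of $T$ we may assume the action is minimal: if it is also elementary then $G$ is virtually free or virtually cyclic and hence virtually cocompact special for trivial reasons, so we assume non-elementarity. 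Finally, all free groups are locally finite-height, as combining Hall's theorem \cite{Hall_1949} with \cite[Main Theorem]{GitikMahanRipsSageev98} shows that any finitely generated subgroup of a free group has finite height in any ambient free group. All hypotheses of Theorem~\ref{thm:mainintro}(a) are therefore met.

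Applying the theorem yields a $G$-equivariant graph epimorphism $\phi\colon T' \to T$ together with a non-elementary, acylindrical, finite-height splitting of $G$ on $T'$ with finitely generated edge stabilisers. Because $\phi$ is equivariant, each vertex stabiliser of $T'$ embeds into some vertex stabiliser of $T$ and so is a subgroup of a free group, hence free. Passing if necessary to the minimal $G$-invariant subtree of $T'$, Bass--Serre theory gives a finite quotient graph (as $G$ is finitely generated), and finite generation of the edge groups then forces each vertex stabiliser of $T'$ to be a finitely generated free group.

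Each edge stabiliser of the new splitting is therefore a finitely generated subgroup of a finitely generated free group and hence is quasi-convex in its neighbouring vertex stabilisers by Hall's theorem. Since $G$ is torsion-free hyperbolic and the vertex stabilisers of $T'$ are hyperbolic, Kapovich's Quasi-Convex Combination Theorem \cite[Theorem~1.2]{Kapovich01} upgrades each edge stabiliser to a quasi-convex subgroup of $G$, producing the desired quasi-convex splitting with free vertex groups that dominates the action on $T$. Finitely generated free groups are cocompact special as fundamental groups of finite graphs, so Wise's combination theorem \cite[Theorem~13.1]{WiseQCBook} applied to this splitting yields that $G$ is virtually cocompact special. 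The only real obstacle---preserving acylindricity and promoting it to finite-height through the Dunwoody resolution---is handled entirely inside the invocation of Theorem~\ref{thm:mainintro}(a); the remaining work is a direct verification of the hypotheses of Kapovich's and Wise's theorems.
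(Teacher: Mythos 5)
Your proof is correct and follows the same route the paper intends: apply Theorem~\ref{thm:mainintro}(a) (with free groups locally finite-height) to produce a dominating finite-height splitting with finitely generated edge stabilisers, then use Kapovich's Quasi-Convex Combination Theorem to upgrade to quasi-convexity, and finish with Wise's theorem. One small point worth spelling out: the vertex stabilisers of $T$ may be infinitely generated free groups, so to see that they are locally finite-height you need to embed them into $F_2$ (which is locally quasi-convex by Hall's theorem, hence locally finite-height by \cite[Main Theorem]{GitikMahanRipsSageev98}, and finite height passes to subgroups), exactly as the paper does; your phrasing ``any ambient free group'' elides this embedding step.
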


Our methods also apply to subgroups of hyperbolic groups with quasi-convex splittings over locally finite height groups. The following corollary allows us to obtain virtual compact specialness for many groups without appealing to Conjecture~\ref{CONJECTURE}.

\begin{corollary}\label{cor:introStrong}
    Let $G$ be a hyperbolic group, and let $H$ be a hyperbolic subgroup of $G$.
    
    \begin{enumerate}[(a)]
        \item If $G$ admits an action on a tree $T$ that is a quasi-convex splitting over a locally quasi-convex group, and $H$ the induced action of $H$ on $T$ is non-trivial, then $H$ admits a non-trivial quasi-convex splitting that dominates the minimal subtree of the induced action of $H$ on $T$.
        \item If $G$ admits a quasi-convex hierarchy with locally quasi-convex edge stabilisers then $H$ admits a quasi-convex hierarchy, and therefore $H$ is virtually compact special by \cite[Theorem~13.1]{WiseQCBook}.
    \end{enumerate}
\end{corollary}

In the case where $G$ has virtual cohomological dimension $2$, and so we can apply \cite[Theorem~5.4]{Gersten96}, we need only ask for $H$ to be finitely presented. For example, the following settles our opening question.
\begin{corollary}\label{cor:intro}
      Let $H$ be a finitely generated subgroup of a one-relator group $G$ such that $G$ admits an acylindrical Magnus hierarchy $G_0\hookrightarrow G_1\hookrightarrow\cdots\hookrightarrow G_n\cong G$. Then $H$ admits a a quasi-convex hierarchy, and so is virtually compact special by \cite[Theorem~13.1]{WiseQCBook}. In particular the group $H$ is acylindrically arboreal if and only if it admits a non-trivial a quasi-convex splitting.
     
     Furthermore, $H$ admits a quasi-convex hierarchy and in particular is virtually compact special by  \cite[Theorem~13.1]{WiseQCBook}.
\end{corollary}

This corollary will apply to subgroups of many two-generator one-relator groups, such as the one-relator group given in Example~\ref{ex:BNS}, and subgroups of all one-relator groups with negative immersions \cite[Corollary~6.9]{Linton25}.

In examples (1) and (2) given above and in Corollary~\ref{cor:immediateIntro} the constructed quasi-convex splitting can be assumed to dominate the original acylindrical splitting, and the splittings produced by Theorem~\ref{thm:mainintro} will also often dominate the original given splitting. Therefore, a particularly interesting strengthening of Question~\ref{Q1} is the following.

\begin{question} \label{Q2}
    Is every non-elementary acylindrical action of a hyperbolic group $G$ on a tree dominated by a quasi-convex splitting?
\end{question}

A positive answer to Question~\ref{Q2} would have strong consequences. In particular, again using \cite[Theorem~13.1]{WiseQCBook}, a positive answer would imply that every hyperbolic group that admits an acylindrical hierarchy in fact admits a quasi-convex hierarchy, and therefore is virtually compact special with a proof analogous to that of Corollary~\ref{cor:intro}. This would provide a significantly weaker condition for virtual specialness of a hyperbolic group than we currently have at our disposal.

We finish by illustrating the issues that arise when attempting to relax the hypotheses of Theorem~\ref{thm:mainintro}. In Section~\ref{sec:Broken} we will show that the Dunwoody--Sageev resolution cannot be assumed to preserve the acylindricity of a general action with the following proposition.

 \begin{proposition}\label{prop:introBadExample}
    There exists a (non-hyperbolic) finitely presented group $G$ such that:
    \begin{enumerate}
        \item $G$ acts acylindrically and non-elementarily on a tree $T$ with infinitely generated edge stabilisers; and
        \item if $T'$ is a tree on which $G$ acts acylindrically with $T'$ dominating $T$ then the action of $G$ on $T'$ has at least one infinitely generated edge stabiliser.
    \end{enumerate}
    In particular, the Dunwoody--Sageev resolution will not preserve acylindricity in general.
 \end{proposition}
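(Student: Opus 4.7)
My plan is to construct $G$ explicitly as an amalgamated product or multiple HNN extension whose Bass--Serre tree $T$ has edge stabilisers conjugate to an infinitely generated commensurated subgroup of the vertex groups, and then exploit this commensurating structure to obstruct any acylindrical refinement. A natural family of candidates is built from Baumslag--Solitar pieces $\mathrm{BS}(1,n) = \langle a, t \mid t a t^{-1} = a^n \rangle$, whose commensurated subgroup $\mathbb{Z}[1/n]$ is infinitely generated and fails to be locally finite-height. The delicate point is that $G$ must itself be finitely presented, while amalgamating over an infinitely generated subgroup normally requires infinitely many identifications; the construction must exploit the self-similarity $t\,\mathbb{Z}[1/n]\,t^{-1} = \mathbb{Z}[1/n]$, possibly together with auxiliary stable letters, to force those infinitely many identifications to be consequences of finitely many relations. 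Non-hyperbolicity will follow from the presence of distorted $\mathbb{Z}$'s inherited from the $\mathrm{BS}(1,n)$ factors.

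For part (1), non-elementarity and minimality follow once two loxodromic elements with disjoint axes in $T$ are exhibited, which is routine from the graph-of-groups decomposition. For acylindricity one observes that although a single edge stabiliser is infinitely generated, any two distinct edge inclusions into a common vertex group intersect in a proper commensurated subgroup, and iterating along an edge path of bounded length reaches a finite intersection (forced in the Baumslag--Solitar case by the identity $\bigcap_{k} n^{k}\mathbb{Z} = 0$).

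For part (2), suppose for contradiction that $\phi \colon T' \to T$ is a $G$-equivariant surjection with $T'$ acylindrical and every edge stabiliser of $T'$ finitely generated, and fix an edge $e \in T$ with infinitely generated stabiliser $H$. The preimage $\phi^{-1}(e)$ is an $H$-invariant subtree of $T'$, and the commensurating element $t \in G$ satisfies $t^{-1} H_0 t \supsetneq H_0$ for a chosen finitely generated $\mathbb{Z} \leq H$, producing an infinite strictly ascending chain of subgroups inside $H$. This chain must be visible in the action of $H$ on $\phi^{-1}(e)$, and a pigeonhole argument combined with the acylindricity of $T'$ should then force some vertex or edge of $\phi^{-1}(e)$ with stabiliser that cannot be finitely generated, yielding the required contradiction. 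The principal obstacle is precisely this final step: one must rule out \emph{every} $G$-equivariant acylindrical refinement of $T$, not merely the Dunwoody resolution, by identifying the commensurating structure as the algebraic invariant that is preserved under all such refinements and is incompatible with finite generation of edge stabilisers in an acylindrical action. Verifying this rigidity cleanly, and engineering $G$ so that it genuinely has these commensurating features while remaining finitely presented, is the crux of the argument.
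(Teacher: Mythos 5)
Your approach has a fatal flaw in part~(1). You propose building $T$ so that an edge stabiliser $H$ is an infinitely generated \emph{commensurated} subgroup of a Baumslag--Solitar piece (e.g.\ $\mathbb{Z}[1/n]$ in $\mathrm{BS}(1,n)$, or $\langle a\rangle$ which satisfies $t\langle a\rangle t^{-1}=\langle a^n\rangle$). But if $H$ is commensurated, then for every $g$ the intersection $H\cap gHg^{-1}$ has finite index in both, hence is \emph{infinite} whenever $H$ is infinite; translating along the axis of a commensurating element therefore produces arbitrarily long edge paths whose pointwise stabiliser contains an infinite group. This is exactly what Definition~\ref{Def:kc} forbids, so such a $T$ cannot be acylindrical. (Indeed $\mathrm{BS}(1,n)$ does not act acylindrically on its own Bass--Serre tree for precisely this reason.) Your invocation of $\bigcap_k n^k\mathbb{Z}=0$ concerns the intersection over \emph{all} $k$, not any finite truncation; the stabiliser of a length-$k$ path is $n^k\mathbb{Z}$, which is still infinite, so there is no $(k,C)$ that works. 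The commensurating structure you want to use as an obstruction in part~(2) is precisely incompatible with the acylindricity you need in part~(1).

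The paper sidesteps this by using an infinitely generated \emph{malnormal} subgroup $H_1\cong \mathbb{F}_{\mathbb{N}}\times\mathbb{Z}$ of a vertex group $G_2$ as the edge group, so conjugate intersections are trivial and the splitting $G=G_2\ast_{H_1}(H_1\ast_{\langle t_0\rangle}\mathbb{Z}^2)$ is $(3,1)$-acylindrical. Finite presentability of $G$ is not obtained by ``self-similarity'' but by Wagner's malnormal embedding theorem, which places the recursively presented group $G_1=\langle G_0,t_0\mid [t_0,H_0]\rangle$ malnormally inside a finitely presented $G_2$. For part~(2) the decisive mechanism is also different from what you propose: the vertex group $V=H_1\ast_{\langle t_0\rangle}\mathbb{Z}^2$ has $\langle t_0\rangle\cong\mathbb{Z}$ as an infinite normal abelian subgroup, so by Lemma~\ref{lem:AAnormal} $V$ is not acylindrically arboreal. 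Hence in any acylindrical $T'$ dominating $T$, the action of $V$ on the preimage of its vertex must be elliptic, so $V$ is a full vertex stabiliser of $T'$; since $V$ is infinitely generated, Lemma~\ref{lem:bieri} forces some edge stabiliser of $T'$ to be infinitely generated. You do gesture at a related rigidity idea, but as you acknowledge you have no proof of it, and the clean route is to make a vertex group fail to be acylindrically arboreal rather than to track a commensurating structure through $\phi$.
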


Somewhat unfortunately, the group constructed does admit an acylindrical action with finitely generated edge stabilisers. To counter this, we ask the following.

 \begin{question}\label{q:mine}
    Does every acylindrically arboreal finitely presented group  admit a non-elementary acylindrical action on a simplicial tree with finitely generated edge stabilisers?
 \end{question}

\subsection{Acknowledgements}
This work was completed while the author was a PhD student at the University of Cambridge, supervised by Jack Button, and the author is very grateful to his supervisor for all of his help. The author would also like to thank Henry Wilton for several very helpful discussions, and Daniel Groves for his helpful suggestions. Finally, the author is very grateful to an anonymous reviewer, whose careful reading and helpful suggestions allowed for a much shortened proof of our main theorem, and for Corollaries~\ref{cor:introStrong} and~\ref{cor:intro} to be proved independently from Conjecture~\ref{CONJECTURE}.

Financially, support from the Cambridge Trust Basil Howard Research Graduate Studentship is gratefully acknowledged. 

\section{Preliminaries - Acylindrical Hyperbolicity and Acylindrical Arboreality}
We will assume the reader has some familiarity with Bass--Serre theory, and for a more detailed discussion we refer to \cite{stilwell2002trees, dicks2011groups}. We define a \emph{graph of groups} $(\Gamma, \mathfrak{G})$ to be a connected directed graph $\Gamma$ that may not be finite or simple along with $\mathfrak{G}$, which is the following data:
\begin{itemize}
    \item To every vertex $v\in V(\Gamma)$ we assign a \emph{vertex group} $G_v$, and to every edge $e\in E(\Gamma)$ we assign an \emph{edge group} $G_e$;
    \item To every edge $e\in E(\Gamma)$ we assign monomorphisms $d_0:G_e\rightarrow G_{i(e)}$ and $d_1:G_e\rightarrow G_{t(e)}$, where $i(e)$ and $t(e)$ are the initial and terminal vertices of $e$ in $\Gamma$ respectively.
\end{itemize}
We define the fundamental group $\pi_1(\Gamma, \mathfrak{G})$ of $(\Gamma, \mathfrak{G})$ as in \cite[Section~I.5.1]{stilwell2002trees}.

We will use a slight abuse of notation to consider each vertex group $G_v$ as a subgroup of the fundamental group $\pi_1(\Gamma, \mathfrak{G})$ along the natural inclusion. Similarly, we will consider each edge group $G_e$ to be the subgroup of the fundamental group given by the image of $d_0(G_e)$ in the vertex group $G_{i(e)}$. We call a graph of groups \emph{trivial} if there exists some $v\in V(\Gamma)$ such that $G_v= \pi_1(\Gamma, \mathfrak{G})$, or \emph{non-trivial} otherwise. We say that a graph of groups $(\Gamma, \mathfrak{G})$ is a \emph{graph of groups decomposition} or \emph{splitting} of a group $G$ if the fundamental group $\pi_1(\Gamma, \mathfrak{G})$ is isomorphic to $G$, and identify $G$ with $\pi_1(\Gamma, \mathfrak{G})$. We denote by $T(\Gamma, \mathfrak{G})$ the \emph{Bass--Serre tree} associated to the splitting, on which $G$ acts naturally by isometry with respect to the edge metric and without inversion as in \cite[Section~I.5.3]{stilwell2002trees}. 

From now on we will assume that all trees are simplicial, and all actions on trees are by simplicial isometry and without inversion, so that every action on a tree is associated to a \emph{quotient graph of groups} as in \cite[Section~I.5]{stilwell2002trees}, whose edge and vertex groups correspond to conjugacy classes of stabilisers of the original group action. We say that an action of a group $G$ on a tree $T$ is \emph{minimal} if there exists no proper subtree of $T$ that is fixed setwise by the action of $G$. We have the following lemma, which will allow us to assume that the actions we consider are minimal in most cases.

\begin{lemma}\label{lem:minimal}\cite[Lemma~I.4.13]{dicks2011groups}
    Let $G$ be a finitely generated group acting on a tree $T$. Then there exists a unique minimal subtree $T_m\subseteq T$, and the induced action of $G$ on $T_m$ has finitely many orbits of edges.
\end{lemma}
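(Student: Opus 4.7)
The plan is to construct $T_m$ explicitly, establish its minimality, and then exploit finite generation. First I would split into cases: if $G$ fixes a vertex of $T$ (elliptic case), one takes $T_m$ to be a single fixed vertex (uniqueness in this case being understood modulo the choice within the fixed-point subtree), and the edge-orbit claim is vacuous. Hereafter assume $G$ contains a hyperbolic element.

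I would then define $T_m := \bigcup_{h} \mathrm{Axis}(h)$ over all hyperbolic $h \in G$. This set is $G$-invariant because $g \cdot \mathrm{Axis}(h) = \mathrm{Axis}(g h g^{-1})$ and conjugation preserves hyperbolicity. The key technical point is to show that $T_m$ is connected, hence a subtree: given hyperbolic elements $h_1, h_2$ with disjoint axes $A_1, A_2$ and bridge $\beta = [p_1, p_2]$, a ping-pong argument shows that for large $N$ the element $h_1^N h_2^N$ is hyperbolic with axis crossing $\beta$, so the bridge lies in $T_m$ and $A_1, A_2$ fall in the same connected component.

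For minimality and uniqueness, I would show that any nonempty $G$-invariant subtree $T'$ contains $\mathrm{Axis}(h)$ for every hyperbolic $h \in G$. A standard nearest-point projection argument gives that $T' \cap \mathrm{Axis}(h)$ is nonempty; being $h$-invariant and consisting of a union of closed subsegments of the axis, it must equal all of $\mathrm{Axis}(h)$ once one takes $h$-translates. Therefore $T_m \subseteq T'$, so $T_m$ is the unique minimal $G$-invariant subtree.

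Finally, for finitely many edge orbits, let $g_1, \ldots, g_n$ generate $G$, pick a base vertex $v_0 \in T_m$, and set $F := \bigcup_{i=1}^n [v_0, g_i v_0]$, a finite subtree of $T_m$. Then $G \cdot F$ is $G$-invariant and contains $v_0$; it is connected because any $g v_0$ with $g = g_{i_1}^{\pm 1} \cdots g_{i_k}^{\pm 1}$ is reached from $v_0$ through the partial products, each consecutive pair being joined by a translate of a segment in $F$. Hence $G \cdot F$ is a $G$-invariant subtree of $T_m$, and by minimality $G \cdot F = T_m$, so $T_m$ has at most $|E(F)|$ orbits of edges. The main obstacle is the ping-pong connectedness step, which requires a careful translation-length estimate to verify that $h_1^N h_2^N$ is indeed hyperbolic; once this is in place the rest of the argument is essentially formal.
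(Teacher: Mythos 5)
Your argument is correct and reconstructs the standard proof; the paper itself offers no proof of this lemma beyond citing Dicks--Dunwoody, and what you have written is essentially the argument found there and in Serre's \emph{Trees}. The ``careful translation-length estimate'' you flag as the main obstacle is in fact a well-known elementary lemma and requires no passage to large powers: if $h_1, h_2$ are hyperbolic with disjoint axes $A_1, A_2$, then $h_1 h_2$ itself is hyperbolic with $\ell(h_1 h_2) = \ell(h_1) + \ell(h_2) + 2\,d(A_1, A_2) > 0$ and its axis contains the bridge $[p_1,p_2]$ (see Culler--Morgan, \emph{Group actions on $\mathbb{R}$-trees}, Prop.~1.11, or the analogous statements in Serre), so connectedness of the union of axes follows at once. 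You are also right to be wary of the elliptic case: as literally stated the uniqueness of the minimal subtree fails when $G$ fixes more than one vertex, and the lemma is only invoked in the paper in situations where the action is non-trivial, i.e.\ where a hyperbolic element exists.
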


The following lemma of Bieri will be essential in dealing with finite generation of stabilisers of actions on trees.

\begin{lemma} \label{lem:bieri}\cite[Proposition~2.13(a) and (b)]{bieri1981homological}
    Let $G$ be a finitely generated group acting on a tree $T$ with finitely many orbits of edges. If every edge stabiliser of this action is finitely generated, then so is every vertex stabiliser.
\end{lemma}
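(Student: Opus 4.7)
The plan is to pass to the quotient graph of groups $(\Gamma, \mathfrak{G})$, for which Bass--Serre theory gives $G \cong \pi_1(\Gamma, \mathfrak{G})$, and then combine an ascending-chain argument with the normal form theorem. Since $T$ is connected and the action has finitely many orbits of edges, $\Gamma$ is a finite graph (we may assume $T$ has at least one edge, else the conclusion is immediate). Fix representatives $v_0, \dots, v_k$ of the vertex orbits with stabilisers $G_0, \dots, G_k$; fix representatives $e_1, \dots, e_m$ of the edge orbits with (finitely generated) stabilisers $G_{e_1}, \dots, G_{e_m}$; and choose a spanning tree $\Gamma_0 \subseteq \Gamma$, giving stable letters $t_1, \dots, t_r \in G$ for the edges of $\Gamma$ outside $\Gamma_0$. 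The standard Bass--Serre presentation then expresses $G$ as generated by $G_0 \cup \cdots \cup G_k \cup \{t_1, \dots, t_r\}$.

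Next I would write each vertex stabiliser as an ascending union $G_i = \bigcup_{n \geq 1} A_n^{(i)}$ of finitely generated subgroups (the chain being constant if $G_i$ is already finitely generated), and set $H_n = \langle A_n^{(0)}, \dots, A_n^{(k)}, t_1, \dots, t_r \rangle$. Each $H_n$ is finitely generated with $H_n \subseteq H_{n+1}$ and $\bigcup_n H_n = G$. Since $G$ is itself finitely generated, every finite generating set of $G$ lies in some $H_N$, forcing $H_N = G$ for that $N$. Enlarging $N$, I may further assume that the finitely many edge-inclusion images $d_0(G_{e_j})$ and $d_1(G_{e_j})$ all lie inside the corresponding $A_N^{(i)}$, which is possible because each $G_{e_j}$ is finitely generated.

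Finally I would invoke the normal form theorem for graphs of groups (Britton's lemma). Fix $g \in G_i$. Using $H_N = G$, write $g = a_0 s_1 a_1 \cdots s_p a_p$ with each $a_j \in \bigcup_i A_N^{(i)}$ and each $s_j \in \{t_1^{\pm 1}, \dots, t_r^{\pm 1}\}$; this word traces an edge-loop in $\Gamma$ based at $v_i$. Since it represents a vertex-group element, the word reduces via Britton moves to a single vertex letter in $G_i$. Each such move cancels a sub-word $s a s^{-1}$ (with $a$ in the appropriate edge-group image) by rewriting it as the corresponding element on the other side of the edge via $d_0$ and $d_1$. Because every $d_0(G_{e_j})$ and $d_1(G_{e_j})$ lies in an $A_N^{(\cdot)}$, and consecutive vertex letters may be multiplied inside a single $A_N^{(\cdot)}$, every reduction keeps all vertex letters inside the $A_N$-approximations. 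The final surviving letter therefore lies in $A_N^{(i)}$ and equals $g$, so $G_i = A_N^{(i)}$ is finitely generated; an arbitrary vertex stabiliser is conjugate to some $G_i$ and hence also finitely generated.

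The main technical point is the last step: one must carefully check that every Britton reduction on a word in the $A_N^{(i)}$'s yields another such word, which is exactly why $N$ is chosen so that all edge-inclusion images land in the chain approximations. Once that invariant is in place, the argument simply exploits that the reduced form of a vertex-group element is a single letter.
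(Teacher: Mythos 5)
The paper does not prove this lemma --- it is cited verbatim from Bieri's notes --- so there is no in-paper proof to compare against. Your proposed proof is therefore its own self-contained argument, and it is essentially correct: exhaust each $G_i$ by finitely generated subgroups $A_n^{(i)}$ (possible since $G$, and hence each $G_i$, is countable), use finite generation of $G$ to find $N$ with $H_N = G$, enlarge $N$ so that the finitely many edge-group images land inside the $A_N^{(i)}$, and then trap each vertex group by a normal-form argument. This is a clean route to Bieri's proposition.

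One step is stated more casually than it should be. When you say ``write $g = a_0 s_1 a_1 \cdots s_p a_p$ with each $a_j \in \bigcup_i A_N^{(i)}$ \ldots\ this word traces an edge-loop in $\Gamma$ based at $v_i$,'' this is not automatic. An arbitrary word in the generators of $H_N$ may have consecutive vertex letters coming from vertex groups $A_N^{(u)}$ and $A_N^{(v)}$ with $u\neq v$, and may place a vertex letter at a vertex that is not an endpoint of the adjacent stable letter's edge; such a string is not yet a $\Gamma$-word, so Britton's lemma does not directly apply. The fix is standard and preserves your invariant: working in the Serre presentation relative to the spanning tree $\Gamma_0$, insert the appropriate $\Gamma_0$-geodesics between consecutive incompatible syllables (and at the two ends, to anchor the word at $v_i$). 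The stable letters thereby inserted are trivial and the vertex letters inserted are all the identity element, hence lie in every $A_N^{(j)}$; so after this normalisation the word genuinely traces a based loop in $\Gamma$ with all vertex syllables still in the $A_N^{(j)}$. From that point your Britton-reduction bookkeeping --- edge images sit in $A_N$, consecutive vertex syllables at a common vertex multiply inside $A_N$ --- is exactly right and yields $G_i = A_N^{(i)}$. I'd record this normalisation explicitly; otherwise the argument is complete.
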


A common generalisation of a graph of groups is that of a hierarchy of groups, or an iterated graph of groups decomposition.

\begin{definition}\label{def:hier}
    Let $G$ be a group and $n$ a non-negative integer. For all $0\leq i< n$ let $J_i$ be an indexing set that may not be finite, and let $J_n=\{1\}$. A \emph{hierarchy decomposition for $G$ of length $n$} is a collection of groups $\{G_{i, j}\}_{0\leq i\leq n, j\in J_i}$ such that
    \begin{enumerate}[(H1)]
        \item $G\cong G_{n, 1}$.
        \item For all $j\in J_0$, $G_{0, j}$ is finite.
        \item For all $0< i\leq n$, for all $j\in J_i$, the group $G_{i, j}$ admits an action on a tree $T_{i, j}$ whose vertex stabilisers are groups of the form $G_{i-1, k}$ for some $k\in J_{i-1}$.
    \end{enumerate}
\end{definition}

There are various properties of different strengths that we can ask for a hierarchy or action on a tree to satisfy, where if (P) is some property of a graph of groups decomposition, we say that a hierarchy decomposition $\{G_{i, j}\}$ for a group $G$ has property $P$ if the action of $G_{i, j}$ on $T_{i, j}$ has property (P) for all $i, j$. In this paper, we are particularly interested in the following properties. 

Intuitively, an action is \emph{acylindrical} if any long enough path in the tree has a \emph{uniformly} bounded pointwise stabiliser. Formally, we define acylindricity as follows.

\begin{definition}\label{Def:kc} Let $G$ be a group acting by simplicial isometry on some simplicial tree $T$ and let $k\geq0$ and $C>0$ be integers. We say that the action of $G$ on $T$ is $(k, C)$\emph{-acylindrical} if the pointwise stabiliser of any edge path in $T$ of length at least $k$ contains at most $C$ elements. We say the action of $G$ on $T$ is \emph{acylindrical} if there exist $k$ and $C$ such that the action is $(k, C)$-acylindrical, and that such an action is \emph{non-elementary} if the action of $G$ on $T$ is non-trivial and does not fix any bi-infinite geodesic setwise.

If a group $G$ admits a non-elementary acylindrical action on some tree $T$ then we say that $G$ is \emph{acylindrically arboreal}.
\end{definition}

\begin{example}
    Several interesting examples of acylindrically arboreal groups may be found in the literature, including:
    \begin{enumerate}
    \item The free product $G$ of any two non-trivial groups induces a $(1, 1)$-acylindrical action on its corresponding Bass--Serre tree, which will be non-elementary unless $G\cong D_\infty$.
    \item For a more complicated example, a right angled Artin group is acylindrically arboreal if and only if the underlying graph has graph theoretical diameter at least three \cite[Corollary~3.12]{Cohen25}.
    \item If $G$ is the fundamental group of a closed non-geometric $3$-manifold, then Wilton and Zalesskii prove that $G$ acts acylindrically on the tree corresponding to its JSJ-decomposition \cite[Lemma~2.4]{WiltonZalesskii10}. In the hyperbolic case, the fundamental group $G$ of a finite volume orientable hyperbolic $3$-manifold will be acylindrically arboreal if and only if the manifold contains an embedded $2$-sided closed incompressible surface whose fundamental group includes into $G$ as a geometrically finite subgroup \cite[Theorem~1.6]{Cohen25}.
    \end{enumerate}
\end{example}
By a result first explicitly proved in \cite[Theorem~2.17]{Cohen25}, but
essentially due to Minasyan and Osin \cite[Lemma~4.2]{MinasyanOsin15}, an action on a simplicial tree is non-elementary acylindrical as in Definition~\ref{Def:kc} if and only if it is non-elementary acylindrical in the sense of Bowditch \cite{Bowditch08}. Thus, if a group is acylindrically arboreal then it is acylindrically hyperbolic, and we have the following powerful classification theorem.
\begin{theorem}
    \textit{\cite[Theorem~1.1]{Osin16}}\label{Thm:categorisedActions} Let $G$ be a group acting acylindrically on a tree $T$. Then $G$ satisfies exactly one of the following conditions.
    \begin{enumerate}
        \item The action of $G$ on $T$ has a global fixed point. In this case, we say that the action of $G$ on $T$ is \emph{elliptic}.
        \item The group $G$ is virtually cyclic and the action of $G$ on $T$ fixes some bi-infinite geodesic in $T$ setwise. In this case we say that the action of $G$ on $X$ is \emph{lineal}.
        \item Neither of the previous cases occur. In this case we call the action \emph{non-elementary}.
    \end{enumerate}       
\end{theorem}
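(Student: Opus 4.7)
The plan is to combine the classical Chiswell--Tits classification of isometric actions on trees with the constraint imposed by acylindricity. Recall that any action of a group on a simplicial tree without inversion falls into one of five mutually exclusive types: elliptic (global fixed point); horocyclic (every element elliptic, $G$ fixes a unique end $\xi$, but no vertex); lineal (an invariant bi-infinite geodesic line $L\subseteq T$); focal (a unique fixed end $\xi$ together with a hyperbolic element whose axis terminates at $\xi$); and of general type (two hyperbolic elements with disjoint axes). The three cases in the theorem are exactly elliptic, lineal with $G$ virtually cyclic, and of general type, so what needs to be shown is that the horocyclic and focal cases are incompatible with acylindricity, and that a lineal acylindrical action forces $G$ to be virtually cyclic.

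The horocyclic case is the cleanest. Fix a ray $R=[v_0,\xi)$ with vertices $v_0,v_1,\ldots$ toward the unique fixed end, and consider the pointwise stabilisers $G_n:=\operatorname{Stab}_G([v_0,v_n])$. Every $g\in G$ is elliptic and fixes $\xi$, so its fixed subtree contains a cofinal subray of $R$ and hence $g\in G_n$ for all sufficiently large $n$; thus $G=\bigcup_n G_n$. By $(k,C)$-acylindricity, $|G_n|\leq C$ for $n\geq k$, so the weakly decreasing chain $(G_n)_{n\geq k}$ is eventually constant and equal to some finite subgroup $F$. Then $G=F$ is finite and so has a global fixed point by Serre's fixed point theorem, contradicting the horocyclic hypothesis. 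The lineal case is equally direct: letting $L$ be the invariant line and $K$ the pointwise stabiliser of $L$, the restriction gives a homomorphism $G\to\operatorname{Isom}(L)\cong D_\infty$ whose kernel $K$ has order at most $C$ by acylindricity, and since $\operatorname{Isom}(L)$ is virtually cyclic so is $G$.

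The focal case is the main obstacle, and I would handle it via the Busemann cocycle. The Busemann function based at $\xi$ yields a surjective homomorphism $\lambda\colon G\to\Z$, whose kernel $N$ is exactly the set of elliptic elements of $G$. The restriction of the action to $N$ remains acylindrical, and since every element of $N$ is elliptic, $N$'s action on $T$ must be either elliptic or horocyclic; the horocyclic possibility is excluded by the previous paragraph, so $N$ has a non-empty fixed subtree $T^N\subseteq T$. Normality of $N$ in $G$ forces $T^N$ to be $G$-invariant, and the induced $\Z$-action of $G/N$ on $T^N$ is generated by a single isometry that must be either elliptic (producing a $G$-fixed vertex, contradicting focal) or hyperbolic with axis contained in $T^N$ (producing a $G$-invariant line on which $N$ acts trivially, contradicting focal since it is disjoint from lineal). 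Either way, focal is impossible, completing the classification.

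I expect the focal case to be the genuinely hard part: unlike the horocyclic situation, a direct stabiliser-chain argument does not immediately yield a contradiction because hyperbolic elements do not pointwise fix segments. The reduction through the Busemann kernel cleanly collapses focal to the already-handled elliptic and lineal cases; a more elementary alternative would be to compare the axes of $g$ and a conjugate $hgh^{-1}$, which share a cofinal ray to $\xi$, and extract elliptic elements fixing arbitrarily long subsegments of that shared ray, but this route requires delicate bookkeeping about translation directions along common rays that the cocycle argument sidesteps.
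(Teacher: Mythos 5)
Your proposal pursues a genuinely different route from the paper. The paper gives no proof of this statement; it cites \cite[Theorem~1.1]{Osin13} directly, and Osin establishes the result for acylindrical actions on arbitrary Gromov-hyperbolic spaces via the dynamics of loxodromic isometries on the Gromov boundary. By contrast, you work with the Chiswell--Tits classification of tree actions (elliptic, horocyclic, lineal, focal, general type) and argue case by case, which is a more elementary, tree-specific approach. Your lineal step is correct, and the Busemann-cocycle reduction of the focal case to the horocyclic and elliptic cases is a clean way to sidestep the axis-comparison bookkeeping you mention (the image of $\lambda$ is a nontrivial subgroup of $\Z$, hence infinite cyclic, which is all that is needed).

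There is, however, a genuine error in the horocyclic case, on which the focal case then depends. You set $G_n := \operatorname{Stab}_G([v_0, v_n])$, a \emph{decreasing} chain, and assert that an elliptic $g$ fixing a cofinal subray of $R$ lies in $G_n$ for all large $n$, so that $G = \bigcup_n G_n$. This fails: if $g$ only fixes $[v_m,\xi)$ with $m>0$, then $g$ need not fix $v_0$, so $g$ may lie in no $G_n$ at all. Moreover, even granting $G=\bigcup_n G_n$, the union of a decreasing chain is $G_0$, not the eventual stable value $F$, so ``$G=F$'' does not follow from ``$(G_n)_{n\ge k}$ is eventually constant at $F$''. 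The chain must run the other way: put $H_m := \operatorname{Stab}_G([v_m,\xi))$, an \emph{increasing} chain with $G=\bigcup_m H_m$ (an elliptic $g$ fixing $\xi$ and a vertex $v$ fixes the entire ray $[v,\xi)$ pointwise, since a simplicial isometry of a ray fixing its initial vertex is the identity, and that ray eventually coincides with $R$). Each $[v_m,\xi)$ is infinite, so $(k,C)$-acylindricity gives $|H_m|\le C$ for every $m$; an ascending union of subgroups each of order at most $C$ has order at most $C$, so $G$ is finite and therefore elliptic by Serre's fixed-point theorem, a contradiction. With this repaired, the rest of your argument goes through.
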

We will also require the following lemma.
\begin{lemma} \cite[Corollary~1.5]{Osin16}\label{lem:AAnormal}
    Let $G$ be an acylindrically arboreal group. Then every infinite normal subgroup of $G$ is acylindrically arboreal. In particular, such a subgroup cannot be abelian.
\end{lemma}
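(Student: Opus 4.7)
The plan is to start from a non-elementary acylindrical action of $G$ on a tree $T$ and show that the restricted $N$-action is itself non-elementary acylindrical. First I would pass to the minimal $G$-invariant subtree of $T$, which exists as the (non-empty, by non-elementarity of the $G$-action) union of axes of hyperbolic elements of $G$, so that we may assume $G$ acts minimally. The $(k,C)$-acylindricity descends trivially to $N$, so by Theorem~\ref{Thm:categorisedActions} applied to the $N$-action it suffices to rule out the elliptic and lineal cases.

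For the elliptic case, suppose $N$ has a global fixed point and let $F\subseteq T$ be its fixed-point set, a non-empty subtree. Normality of $N$ in $G$ makes $F$ a $G$-invariant subtree: for $v\in F$ and $g\in G$, one has $n(gv)=g(g^{-1}ng)v=gv$ since $g^{-1}ng\in N$. By minimality $F=T$, so $N$ fixes arbitrarily long edge paths pointwise, and acylindricity forces $|N|\leq C$, contradicting the infiniteness of $N$.

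For the lineal case, Theorem~\ref{Thm:categorisedActions} would make $N$ virtually cyclic, so $N$ contains an infinite cyclic subgroup $\langle n\rangle$. Acylindricity prevents $n$ from fixing pointwise the invariant bi-infinite geodesic of $N$, since then every power of $n$ would pointwise stabilise an arbitrarily long path; hence $n$ is hyperbolic with some axis $A$. For each $g\in G$, the conjugate $gng^{-1}\in N$ is hyperbolic with axis $gA$, and since $N$ is lineal all such axes must coincide with the unique $N$-invariant bi-infinite geodesic. Therefore $gA=A$ for every $g\in G$, exhibiting $A$ as a $G$-invariant bi-infinite geodesic and contradicting the non-elementarity of the $G$-action.

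With both alternatives ruled out, the $N$-action is non-elementary acylindrical, so $N$ is acylindrically arboreal. For the last sentence, any two commuting hyperbolic tree isometries must share an axis (each preserves the other's axis setwise while also preserving its own), so an abelian group cannot act non-elementarily on a tree. The main obstacle is the interplay of minimality and normality in the elliptic case; once those are correctly combined the argument is essentially formal, and everything else follows cleanly from Theorem~\ref{Thm:categorisedActions}.
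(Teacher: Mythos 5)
The paper does not give its own proof of this lemma; it simply cites \cite[Corollary~1.5]{Osin13}. Your argument is therefore a genuinely different route: a self-contained, tree-theoretic proof. In fact it is arguably the sharper one here. Osin's Corollary~1.5, as literally stated, gives that an infinite ($s$-)normal subgroup of an acylindrically hyperbolic group is acylindrically \emph{hyperbolic}; the lemma as written in the paper claims acylindrical \emph{arboreality}, which is \emph{a priori} stronger (not every acylindrically hyperbolic group is known to act acylindrically and non-elementarily on a tree). Your proof closes that gap directly by exhibiting the non-elementary acylindrical $N$-action on the very same $G$-tree.

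The steps check out. In the elliptic case, $\operatorname{Fix}(N)$ is a non-empty $G$-invariant subtree by normality, minimality forces $\operatorname{Fix}(N)=T$, and acylindricity then bounds $|N|$ by $C$, contradicting infiniteness. In the lineal case, $N$ preserves a line $L$; acylindricity shows the kernel of the $\langle n\rangle$-action on $L$ is finite, hence trivial, so the infinite-order element $n$ embeds in $\operatorname{Isom}(L)\cong D_\infty$ and, having infinite order, must be a translation, i.e.\ hyperbolic with axis $L$. (Your ``hence $n$ is hyperbolic'' quietly elides the reflection case, but this is automatic since reflections of a line have order two.) Normality then forces $gL=L$ for all $g\in G$, contradicting non-elementarity of the $G$-action. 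The final remark on commuting hyperbolic isometries sharing an axis, and hence abelian groups admitting no non-elementary action on a tree, is also correct. Overall the proof is sound and more informative than the citation it replaces; the one thing worth spelling out is the order-two observation dispatching the possible reflection in the lineal case.
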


\subsection{finite height and Quasi-Convex Splittings}\label{sec:fhqc}
As in Question~\ref{Q1}, we are concerned with whether we can use acylindrical splittings to build splittings with stronger properties. In particular, we are concerned with \emph{finite height} and \emph{quasi-convex} splittings of hyperbolic groups. 

Intuitively, a finite height splitting is an action on a tree where a set of edges containing enough elements has a finite stabiliser. It is important to note that, unlike acylindricity, this definition does not require that these stabilisers are uniformly bounded, only that they are finite. More formally, we define finite height as follows.

\begin{definition} \label{def:finheight}
    Let $G$ be a group, and $H\leq G$ a subgroup. For $g_1, g_2\in G$, we say that the conjugates $g_1Hg_1^{-1}$ and $g_2Hg_2^{-1}$ are \emph{essentially distinct} if $g_1H\neq g_2H$.

    For some natural number $n$, we say that $H$ has $\emph{height at most n}$ in $G$ if the intersection of any collection of at least $n$ essentially distinct conjugates of $H$ in $G$ is finite. We define the \emph{height} of $H$, $h_G(H)$ to be the smallest $m$ such that $H$ has height at most $m$ in $G$, or to be infinite if no such $m$ exists.
\end{definition}
\begin{example}
    Some of the simplest examples of finite height subgroups are finite index subgroups. If $G$ is a group and $H\leq G$ has finite index then $H$ will have height at most $[G:H]+1$, owing to the fact that no collection of $[G:H]+1$ left cosets of $H$ in $G$ can be disjoint.
\end{example}

We say that the action of a group $G$ on a tree $T$ is a \emph{finite height splitting} if for all edges $e\in E(T)$, the $G$-stabiliser $G_e$ of $e$ is finite height in $G$, i.e $h_G(G_e)$ is finite. As above, some care should be taken when comparing acylindricity to finite height splittings owing to the different restrictions on the finite stabilisers. In particular, while in an acylindrical action stabilisers of long enough paths must be uniformly bounded, in a finite height splitting we only require that stabilisers of long enough sets are finite. The following example demonstrates this difference.

\begin{example}
    Let $G=\bigcup_{i\in\mathbb{N}}\Z/2^i\Z$, a directed union of cyclic groups of order $2^i$. Let $T$ be the tree whose vertex set is labelled by the disjoint union of cosets \[V(T)=\bigsqcup_{j\in\mathbb{N}}\left( \frac{G}{\Z/2^i\Z}\right),\]
    and where each coset $g(\Z/2^i\Z)$ is connected by an edge to every coset $g(\Z/2^{i+1}\Z)$.

    The group $G$ acts on $T$ via the left coset action on the labels of its vertices, and the edge stabilisers of this action are all isomorphic to finite groups, so naturally have height $1$. This action is therefore finite height, but will not be acylindrical, as for all $k$ we can find a path of length $k$ whose stabiliser is contains $2^i$ elements for any $i$ by taking the path that starts at the vertex labelled $(\Z/2^i\Z)$ and finishing at the vertex labelled $(\Z/2^{i+k}\Z)$.
\end{example}

In almost all settings this will not cause issue, but to work in generality we will sometimes need to assume that we do not have arbitrarily large finite subgroups. As such, we say that a group $G$ has \emph{bounded finite subgroups} if there exists some natural number $M$ such that for all finite subgroups $H_f$ of $G$, $|H_f|<M$.

The following lemma follows by definition.
\begin{lemma}\label{lem:FHtoAA}
    Let $G$ be a group with bounded finite subgroup, and assume that $G$ acts on a tree $T$ with finitely many orbits of edges. If the action of $G$ on $T$ is a finite height splitting then it is also an acylindrical splitting.
\end{lemma}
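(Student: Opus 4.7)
The plan is to bound the pointwise stabiliser of a sufficiently long edge path by exhibiting many of its constituent edges that lie in a single $G$-orbit, and then to translate the statement that their stabilisers are essentially disjoint conjugates of a single edge group into the finite-height hypothesis. Let $e_1^\ast,\ldots, e_n^\ast$ be orbit representatives for the action of $G$ on $E(T)$, let $H_i=G_{e_i^\ast}$, let $h=\max_i h_G(H_i)$ (finite by hypothesis), and let $M$ be the uniform bound on orders of finite subgroups of $G$. I will show that the action is $(k,M)$-acylindrical for $k:=hn+1$.

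Given an edge path $e_1\cdots e_\ell$ in $T$ with $\ell\geq k$, the pigeonhole principle produces an index $i$ and distinct edges $e_{j_1},\ldots, e_{j_{h+1}}$ all lying in the orbit of $e_i^\ast$. Write $e_{j_s}=g_s\cdot e_i^\ast$ so that $G_{e_{j_s}}=g_sH_ig_s^{-1}$. The key step is to observe that the $g_s H_i$ are pairwise distinct left cosets: if $g_sH_i=g_tH_i$ then $g_s^{-1}g_t\in H_i$ fixes $e_i^\ast$ and hence $e_{j_s}=e_{j_t}$, contradicting that we chose distinct edges. Thus $g_1H_ig_1^{-1},\ldots, g_{h+1}H_ig_{h+1}^{-1}$ are $h+1$ essentially disjoint conjugates of $H_i$, so by the choice of $h$ their intersection is finite.

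The pointwise stabiliser of the original path is contained in this intersection, which by the bounded finite subgroups hypothesis has order at most $M$. Hence the action is $(hn+1,M)$-acylindrical, proving the lemma. The only mild subtlety in the argument is the translation from ``two edges in the same orbit are distinct'' to ``the corresponding cosets differ'', and this is where the bookkeeping with the stabiliser of $e_i^\ast$ is needed; once that is in place both hypotheses — finite height and bounded finite subgroups — feed in transparently, and the finitely-many-orbits assumption is what makes $h$ finite.
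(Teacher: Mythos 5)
Your proof is correct and is exactly the argument the paper leaves implicit (the paper simply states that this lemma ``follows by definition'' and gives no details). The pigeonhole bound $k=hn+1$ with at most the maximal finite-subgroup order $M$ as the acylindricity constant, together with the observation that distinct edges in the same orbit yield distinct left cosets of the stabiliser, is precisely the routine verification intended.
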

As stated above, the conditions in the statement of Lemma~\ref{lem:FHtoAA} are not overly restrictive. Indeed, these conditions can always be assumed to hold for hyperbolic groups --- a hyperbolic group is finitely generated so any action on a tree admits a unique minimal subtree with finitely many orbits of edges by Lemma~\ref{lem:minimal}, and a hyperbolic group will always have bounded finite subgroups by \cite[Theorem]{Bogopolskii95}, for example. Motivated by Lemma~\ref{lem:FHtoAA} we make the following definition.

\begin{definition}
    Let $G$ be a group with bounded finite subgroups, and assume that $G$ admits a finite height splitting with finitely many orbits of edges. This splitting will be acylindrical by Lemma~\ref{lem:FHtoAA}, so we say that this splitting is \emph{non-elementary} if it is non-elementary in the sense of Theorem~\ref{Thm:categorisedActions}
\end{definition}

The concept of a finite height splitting is strictly stronger than that of an acylindrical one in the setting of groups with bounded finite subgroups acting on trees with finitely many orbits of edges. Consider the following example.
\begin{example}
    Let $G$ be the right angled Artin group on the $4$-path $P_4$, so \[G\cong\langle a, b, c, d\mid [a, b], [b, c], [c, d]\rangle. \] Then by \cite[Corollary~3.12]{Cohen25}, the splitting given by $\langle a, b, c\rangle*_{\langle b, c\rangle}\langle b, c, d\rangle$ is non-elementary acylindrical, but the edge group does not have finite height in $G$. Indeed, the conjugate of $\langle b, c\rangle$ by any power of $a$ will contain the span of $b$, so the height of $\langle b, c\rangle$ in $G$ will be infinite.

\end{example}
In this example, finite height fails ``locally'' --- the edge group is not finite height in the vertex groups into which it includes. It turns out that this is the only way in which an acylindrical splitting can fail to be finite height, as shown by the following lemma.
\begin{lemma}\label{LFHtoFH}
   Let $G$ be a group acting $(k, C)$-acylindrically on some tree $T$ with finitely many orbits of edges. Then this action is a finite height splitting if and only if each edge group has finite height in both of its neighbours, i.e. for all $e\in E(\Gamma)$, $h_{G_{i(e)}}(d_{e, 0}(G_e))<\infty$ and $h_{G_{t(e)}}(d_{e, 1}(G_e))<\infty$.
\end{lemma}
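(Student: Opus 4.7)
The forward direction should be immediate from monotonicity of height under inclusion: if $G_e \leq K \leq G$, then essentially $K$-disjoint conjugates of $G_e$ (corresponding to distinct cosets $k_i G_e$ in $K$) remain essentially $G$-disjoint, so $h_K(G_e) \leq h_G(G_e)$. Applying this with $K = G_{i(e)}$ and $K = G_{t(e)}$ gives that being a finite-height splitting implies each edge group is finite-height in both neighbours.

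For the converse, I fix an edge $e$ and plan to show $h_G(G_e) < \infty$. A collection of $N$ essentially disjoint conjugates $g_i G_e g_i^{-1}$ corresponds bijectively to $N$ distinct edges $e_i = g_i \cdot e$ in the orbit $G \cdot e$, and the intersection $H := \bigcap_i G_{e_i}$ equals the pointwise stabiliser of the (finite) convex hull $T'$ of $\{e_1, \ldots, e_N\}$. If $\diam(T') \geq k$, then $T'$ contains an edge-path of length $\geq k$ and $(k,C)$-acylindricity immediately gives $|H| \leq C$, which is finite. So I may assume $\diam(T') < k$, and argue by contradiction: supposing $H$ infinite, I will bound $N$ by a constant depending only on the action.

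The core step is to bound the degree of every vertex of $T'$. Fix $v \in V(T')$ and an edge-orbit $O \subseteq E(T)$ with representative $e'$. Any edge of $T'$ incident to $v$ lying in $O$ has the form $h \cdot f$ where $f$ is a fixed incident representative and $h$ varies over distinct $G_v$-cosets of $G_f \leq G_v$ (separating into at most two orientation classes, depending on whether $v$ corresponds to $i(e')$ or $t(e')$ under the orbit map). Since $H \leq G_v$ lies in the stabiliser of each such edge, and these stabilisers are $G_v$-essentially disjoint conjugates of $G_f$, having more than $h_{G_v}(G_f) = h_{G_{i(e')}}(G_{e'})$ of them in a single orientation class would force $H$ to be finite by the finite-height-in-neighbours hypothesis applied to $e'$. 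Hence the number of edges of $T'$ incident to $v$ in orbit $O$ is at most $h_{G_{i(e')}}(G_{e'}) + h_{G_{t(e')}}(G_{e'})$, finite by hypothesis.

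Summing over the finitely many $G$-orbits of edges of $T$, every vertex of $T'$ has degree at most some constant $D$ depending only on the action. A tree of diameter less than $k$ with maximum degree at most $D$ has at most some finite number $B = B(D, k)$ of edges, and since $N \leq |E(T')| \leq B$, we obtain the desired contradiction. Taking $N_0 = B + 1$ then yields $h_G(G_e) \leq N_0 < \infty$. The main obstacle the argument must surmount is that $T$ is not assumed to be locally finite; the proof gains local finiteness on $T'$ precisely by combining the finite-height-in-neighbours hypothesis (which bounds incidence within each orbit at each vertex) with finitely many edge-orbits (which sums orbit-wise bounds to a finite total degree bound).
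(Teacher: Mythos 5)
Your proof is correct and follows essentially the same route as the paper's: monotonicity of height for the forward direction, and for the converse, splitting into cases by whether the convex hull $T'$ of the $N$ translated edges has diameter at least $k$ (handled by acylindricity) or less than $k$ (handled by using finite height in the neighbouring vertex groups, organised by edge-orbit and orientation, to bound the degree of each vertex of $T'$ and hence $|E(T')|$). The only cosmetic difference is that you phrase the small-diameter case as a contradiction while the paper argues directly with an explicit bound $n^k$; the content is identical.
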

\begin{proof}
    Firstly, for the if direction assume that the action of $G$ on $T$ is a finite height splitting. Then the stabiliser $G_e$ of each edge $e\in E(T)$ must have finite height in $G$, so in turn must have finite height in the vertex stabilisers of the endpoints of $e$ by definition.

    Now assume for the only if direction that for all $e\in E(T)$, $h_{G_{i(e)}}(d_{e, 0}(G_e))<\infty$ and $h_{G_{t(e)}}(d_{e, 1}(G_e))<\infty$, and let $\Gamma=T/G$. For every $v\in \Gamma$, let 
    \[n_v=\sum_{e\in E(\Gamma) \text{ : }i(e)=v}h_{G_{v}}(d_{e, 0}G_e))+\sum_{e\in E(\Gamma) \text{ : }t(e)=v}h_{G_{v}}(d_{e, 1}G_e)).\]
    Thus, if $\tilde{v}\in V(T)$ is a lift of some vertex $v\in V(\Gamma)$, then the stabiliser of any collection of at least $n_v$ edges adjacent to $\tilde{v}$ is finite by the pigeonhole principle.
    
    Define $n=\max_{v\in V(\Gamma)}n_v$. We claim that every edge stabiliser of the action of $G$ on $T$ has height at most $n^k$. Indeed, let $G_e$ be the stabiliser of the edge $e\in E(T)$ with respect to the action of $G$ on $T$, and let $g_1G_e,...,g_{n^k}G_e$ be a collection of disjoint cosets of $G_e$ in $G$. Let $S\subseteq T$ be the subtree given by the convex hull of the edges $g_1e,...,g_{n^k}G_e$, so \[\Ps_G(S)=\bigcap_{i=1}^{n^k}g_iG_eg_i^{-1}.\] 
    
    We separate into two cases. First, if the graph theoretical diameter of $S$ is at least $k$ then $|\Ps_G(S)|\leq C$ by the acylindricity assumption, so in particular is finite. For the second case, assume the graph theoretical diameter of $S$ is less than $k$. As the diameter of $S$ is less than $k$, $S$ must have some vertex $v$ of degree at least $n$ as a tree of diameter less than $k$ and maximum vertex degree $n-1$ has less than $(n-1)^k$ vertices, and so less than $(n-1)^k-1< n^k$ edges. Thus by construction of $n$ we have that $\Ps_G(S)=\bigcap_{i=1}^{n^k}g_iG_eg_i^{-1}$ is finite.

    Consequently in both cases $\bigcap_{i=1}^{n^k}g_iG_eg_i^{-1}$ is finite, and so this is always finite and $G_e$ has height at most $n^k$ as claimed. It follows that each edge stabiliser of the action of $G$ on $T$ has has height at most $n^k$, and therefore has finite height as required.
\end{proof}

If $G$ is hyperbolic, we will also be interested in quasi-convex splittings, which are an \textit{a priori} stronger concept.
\begin{definition}\label{def:qc}
    Let $G$ be a hyperbolic group, and $H\leq G$ a subgroup. We say that $H$ is \emph{quasi-convex} in $G$ if the map $H\hookrightarrow G$ is a quasi-isometric embedding for any finite generating sets of $G$ and $H$.
\end{definition}
\begin{example}
    Many examples of quasi-convex subgroups of hyperbolic groups exist in the literature. For example, since every bounded set is quasi-isometric to the point, all finite subgroups of any hyperbolic group are quasi-convex, and it is a well known result (see Bowditch \cite[Section~1]{Bowditch98B}, for example) that every two-ended subgroup of a hyperbolic group is quasi-convex. Further important examples of quasi-convex subgroup arise as geometrically finite subgroups of fundamental groups of closed and orientable hyperbolic $3$-manifolds, see \cite[Corollary~1.3]{Hruska10}.
\end{example}
We say that the action of a hyperbolic group $G$ on a tree $T$ is a \emph{quasi-convex splitting} if the $G$-stabiliser of every edge $e\in E(T)$ is quasi-convex in $G$. In such a splitting the vertex stabilisers are also strongly controlled. We have the following result of Bowditch.

\begin{lemma} \cite[Proposition~1.2]{Bowditch98B} \label{lem:BowditchVertex}
    Let $G$ be a hyperbolic group acting on a tree $T$ with quasi-convex edge stabilisers and finitely many orbits of edges. Then each vertex stabiliser of this action is also quasi-convex in $G$.
\end{lemma}

\begin{example}
    Many important examples of quasi-convex splittings are well known. Perhaps the most important example is the JSJ-decomposition of a hyperbolic group, which is the ``maximal'' splittings of a $1$-ended hyperbolic group along two ended subgroups \cite{Bowditch98B}. 
\end{example}

Generally, quasi-convex subgroups are very well behaved and have the following properties.

\begin{lemma}\label{lem:QClemma}
    Let $G$ be a hyperbolic group and $H_1, H_2\leq G$ quasi-convex subgroups of $G$. Then the following hold.
    \begin{enumerate}
        \item \cite[Proposition~3.7]{BridsonHaefliger} The subgroups $H_1$ and $H_2$ are hyperbolic, so in particular they are finitely generated.
        \item \cite[Proposition~3.9]{BridsonHaefliger} The intersection $H_1\cap H_2$ is a quasi-convex subgroup of $G$.
        
    \end{enumerate}
\end{lemma}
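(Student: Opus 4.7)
The plan is to treat the two parts independently via direct geometric arguments in a Cayley graph of $G$. Fix a finite generating set $S$ of $G$ and work inside $\operatorname{Cay}(G, S)$, which is $\delta$-hyperbolic for some $\delta \geq 0$.

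For part~(1), I would first show that each $H_i$ is finitely generated. Let $K$ be a quasi-convexity constant for $H_i$, so every geodesic in $\operatorname{Cay}(G, S)$ joining two elements of $H_i$ stays in the $K$-neighbourhood of $H_i$. Consider the (finite) set
\[
T := \{ h \in H_i : |h|_S \leq 2K + 1 \}.
\]
Given any $h \in H_i$, pick a geodesic from $1$ to $h$ and at each integer vertex $v_j$ choose a nearest point $y_j \in H_i$, which exists with $d(v_j, y_j) \leq K$ by quasi-convexity; taking $y_0 = 1$ and $y_n = h$, the elements $y_j^{-1} y_{j+1}$ have word length at most $2K + 1$, so lie in $T$, and their product is $h$. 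With $H_i$ finitely generated, the inclusion $H_i \hookrightarrow G$ is a quasi-isometric embedding by quasi-convexity, and since hyperbolicity is a quasi-isometry invariant of geodesic metric spaces, $H_i$ inherits a hyperbolic structure from $G$.

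For part~(2), I would fix $h, h' \in H_1 \cap H_2$ and a geodesic $\gamma$ between them in $\operatorname{Cay}(G, S)$, and aim to show that $\gamma$ stays within a uniform neighbourhood of $H_1 \cap H_2$. Quasi-convexity of each $H_i$ already places $\gamma$ inside the intersection $N_{K_1}(H_1) \cap N_{K_2}(H_2)$ of appropriate neighbourhoods, so the heart of the argument is upgrading this approximate intersection to the genuine one. The tool I would invoke is a standard hyperbolic geometry lemma: for quasi-convex subsets $A, B$ of a $\delta$-hyperbolic space, any element of $N_R(A) \cap N_R(B)$ lies within bounded distance of $A \cap B$, with the bound depending only on $R$, $\delta$, and the quasi-convexity constants of $A$ and $B$. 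Applied here, this immediately yields a quasi-convexity constant for $H_1 \cap H_2$ in $G$.

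The main obstacle is precisely this intersection lemma: simply taking nearest points of a vertex of $\gamma$ in each $H_i$ independently places those two points close to one another, not close to a common element of $H_1 \cap H_2$. The standard fix is a thin-quadrilateral argument using $h, h'$ as two of the corners: the two ``$H_i$-approximating'' paths obtained by shadowing $\gamma$ inside $H_1$ and $H_2$ must $\delta$-fellow-travel for a long middle portion, and points on this common portion can be pushed into $H_1 \cap H_2$ using that both $H_1$ and $H_2$ contain endpoints of a bounded-length segment through them. Making this push uniform in the vertex of $\gamma$ chosen is the technical crux.
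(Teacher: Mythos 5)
The paper does not itself prove this lemma: both parts are cited directly from Bridson--Haefliger, so there is no in-paper argument to compare against.

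Your treatment of part~(1) is correct and is the standard argument: nearest-point shadowing along a geodesic from $1$ to $h$ produces a finite generating set of uniformly bounded word length, after which quasi-convexity gives a quasi-isometric embedding $H_i \hookrightarrow G$, and hyperbolicity is a quasi-isometry invariant among geodesic spaces.

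Part~(2) has a genuine gap. The ``standard hyperbolic geometry lemma'' you invoke is false for quasi-convex \emph{subsets}: take $A$ and $B$ to be two disjoint geodesic lines in the hyperbolic plane sharing a common endpoint at infinity; both are $0$-quasi-convex and $A \cap B = \emptyset$, yet $N_R(A) \cap N_R(B)$ is unbounded once $R$ is large. The statement is true only for quasi-convex \emph{subgroups}, and in that form it is essentially equivalent to part~(2), so appealing to it as a black box is circular. The thin-quadrilateral repair you sketch does not close this gap: fellow-travelling between the $H_1$-shadow and the $H_2$-shadow of $\gamma$ places points of $H_1$ near points of $H_2$, but supplies no mechanism for landing near $H_1 \cap H_2$. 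The actual proof (due to Short, and the one in Bridson--Haefliger) is combinatorial rather than purely coarse-geometric: for each vertex $v_j$ on a geodesic from $1$ to $g \in H_1 \cap H_2$ one records a pair $(a_j, b_j) \in H_1 \times H_2$ with $a_j, b_j$ near $v_j$, observes that the products $a_j^{-1}b_j$ range over a fixed finite ball, and then uses a pigeonhole and cut-and-paste argument along the geodesic: a repetition $a_j^{-1}b_j = a_{j'}^{-1}b_{j'}$ yields $a_j a_{j'}^{-1} = b_j b_{j'}^{-1} \in H_1 \cap H_2$, which (together with geodesicity) forces each $v_j$ to lie a bounded distance from $H_1 \cap H_2$. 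The group operation enters in an essential way that has no analogue for general quasi-convex subsets, which is precisely where your proposed lemma fails.
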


The link to finite height splittings, and therefore acylindrical splittings, comes from the following influential result of Gitik, Mj, Rips and Sageev. 

\begin{theorem} \label{thm:QCFH}\cite[Main Theorem]{GitikMahanRipsSageev98}
    Let $G$ be a hyperbolic group, and $H\leq G$ a quasi-convex subgroup of $G$. Then $H$ has finite height in $G$.
\end{theorem}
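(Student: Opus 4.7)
The plan is a geometric argument in the Cayley graph of $G$: an infinite common intersection of many conjugates of a quasi-convex $H$ forces the corresponding cosets to all lie close to a single axis, and local finiteness then bounds the number of such cosets. Fix a finite generating set $S$ making $\Gamma_S(G)$ $\delta$-hyperbolic, and let $H$ be $K$-quasi-convex; each left coset $gH$ is then also $K$-quasi-convex. Suppose for contradiction that for every $n$ there exist essentially disjoint conjugates $H_i = g_iHg_i^{-1}$, $1\le i\le n$, with infinite common intersection $I$. Since $G$ has bounded finite subgroups by \cite{Bogopolskii95}, any infinite subgroup of $G$ contains an element of infinite order; pick such a $\gamma \in I$, and let $A_\gamma \subset \Gamma_S(G)$ be its quasi-axis, a bi-infinite quasi-geodesic preserved setwise by $\langle\gamma\rangle$.

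For each $i$, because $\gamma \in H_i$, left multiplication by $\gamma$ preserves the $K$-quasi-convex set $g_iH$. The key geometric input is that any $\langle\gamma\rangle$-invariant $K$-quasi-convex subset $Q$ of $\Gamma_S(G)$ must fellow-travel with $A_\gamma$: for any point $q \in Q$ the orbit $\{\gamma^n q\}_{n \in \mathbb Z}$ is a bi-infinite quasi-geodesic in $Q$ with the same endpoints on $\partial G$ as $A_\gamma$, and the Morse Lemma combined with $K$-quasi-convexity of $Q$ yields a uniform bound $D = D(\delta, K)$ on the Hausdorff distance between $Q$ and $A_\gamma$. Choosing a fundamental domain $F \subset A_\gamma$ for $\langle\gamma\rangle$ of length $\tau(\gamma)$, each coset $g_iH$ therefore contains a point $p_i$ within distance $D$ of $F$. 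Local finiteness of $\Gamma_S(G)$ bounds the number of such points by some $N$ depending only on $\delta$, $K$, $\tau(\gamma)$, and $|S|$; and two $p_i, p_j$ lying in the same coset of $H$ would force $g_iH = g_jH$, contradicting essential disjointness. Hence $n \le N$.

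The main obstacle is converting this into a \emph{uniform} height bound, since $N$ a priori depends on the translation length $\tau(\gamma)$ of the chosen infinite-order element of $I$, and different configurations could in principle supply $\gamma$ of unbounded $\tau(\gamma)$. The resolution is to choose $\gamma$ carefully: infinite-order elements of a hyperbolic group have translation length uniformly bounded below by a theorem of Delzant, and replacing $\gamma$ by a primitive generator of a maximal virtually cyclic subgroup of $I$ yields an upper bound on $\tau(\gamma)$ depending only on $G$ and $K$, using the classification of centralizers of loxodromic elements in hyperbolic groups. With this refinement the bound $N = N(G, K)$ is uniform, so $h_G(H) \le N < \infty$ as required.
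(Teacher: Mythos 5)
The paper does not give a proof of this theorem; it is cited from Gitik--Mj--Rips--Sageev, so there is no in-paper argument to compare against. I will therefore assess your proposal on its own merits. Your overall strategy is the right one (and essentially the one in the cited reference), but as written the central geometric claim is false and the patch in your final paragraph is both unnecessary and incorrect.

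The claimed bound on the Hausdorff distance between a $\langle\gamma\rangle$-invariant $K$-quasi-convex set $Q$ and the quasi-axis $A_\gamma$ is false: take $Q = G$ itself, which is $0$-quasi-convex and invariant under every element, yet is not within bounded distance of any axis. What \emph{is} true, and is all you need, is the one-sided containment $A_\gamma \subseteq N_D(Q)$ with $D=D(\delta,K)$: for any $q\in Q$, the orbit $\{\gamma^n q\}$ lies in $Q$ and accumulates on the two fixed points $\gamma^{\pm}\in\partial G$, so the geodesics $[\gamma^{-n}q,\gamma^{n}q]$, which lie in $N_K(Q)$ by quasi-convexity, converge on compacta to a bi-infinite geodesic with endpoints $\gamma^{\pm}$, which in turn lies within a Morse constant of $A_\gamma$. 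This gives $A_\gamma\subseteq N_{K+C(\delta)}(g_iH)$ for each $i$, which suffices.

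Once you have this, the fundamental-domain device and the resulting dependence of $N$ on $\tau(\gamma)$ are both red herrings: fix a single point $x_0\in A_\gamma$. Since $x_0\in N_D(g_iH)$ for each $i$, there are points $p_i\in g_iH$ with $d(x_0,p_i)\le D$. If $p_i=p_j$ then $g_iH\cap g_jH\ne\emptyset$, hence $g_iH=g_jH$, contradicting essential disjointness; so the $p_i$ are distinct and all lie in the ball $B(x_0,D)$, whence $n\le |B(x_0,D)|$, a bound depending only on $\delta$, $K$ and the generating set. There is no need for a translation-length bound at all. Your proposed fix is also wrong on its own terms: Delzant's theorem gives a uniform \emph{lower} bound on stable translation lengths, not an upper bound, and primitive loxodromic elements in a hyperbolic group do not have uniformly bounded translation length (consider $a^n b$ in $F_2$), so the claimed $N(G,K)$ does not come out of that route. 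With the one-sided containment and a single basepoint the argument closes cleanly without it.
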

The following corollary is well known, although we include a brief proof for completeness.
\begin{corollary}
    Let $G$ be a hyperbolic group that is not virtually cyclic, and let the action of $G$ on some tree $T$ be a minimal and non-trivial quasi-convex splitting. Then this action is a non-elementary acylindrical splitting.
\end{corollary}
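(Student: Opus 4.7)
The plan is to chain together the results already recorded in the excerpt. First I would observe that each edge stabiliser $G_e$ of the action is quasi-convex in $G$ by hypothesis, so Theorem~\ref{thm:QCFH} applies and gives $h_G(G_e) < \infty$ for every edge $e$. Thus the action is a finite-height splitting.

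Next I would set up the hypotheses of Lemma~\ref{lem:FHtoAA}. Since $G$ is hyperbolic it is finitely generated, so by Lemma~\ref{lem:minimal} the minimal tree $T$ carries only finitely many $G$-orbits of edges. Moreover $G$ has bounded finite subgroups by the result of Bogopolski cited just after Lemma~\ref{lem:FHtoAA}. Hence Lemma~\ref{lem:FHtoAA} applies, and the action of $G$ on $T$ is acylindrical in the sense of Definition~\ref{Def:kc}.

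It remains to rule out ellipticity and lineality so that Theorem~\ref{Thm:categorisedActions} forces the action to be non-elementary. If the action were elliptic, some vertex $v$ of $T$ would be fixed by all of $G$; by minimality $T=\{v\}$, giving $G_v = G$ and contradicting non-triviality of the splitting. If the action were lineal, then Theorem~\ref{Thm:categorisedActions} says $G$ is virtually cyclic, contradicting our assumption. Hence the action falls into the third case of Theorem~\ref{Thm:categorisedActions} and is non-elementary, completing the proof.

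I do not anticipate a main obstacle here: the corollary is essentially an assembly of previously recorded facts, and the only point requiring a moment's care is the verification that non-triviality together with minimality actually excludes a global fixed point, which is immediate from the definitions.
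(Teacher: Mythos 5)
Your argument is correct and follows the same route as the paper: apply Theorem~\ref{thm:QCFH} to get finite height, verify the hypotheses of Lemma~\ref{lem:FHtoAA} (finitely many edge orbits via Lemma~\ref{lem:minimal}, bounded finite subgroups via Bogopolski) to get acylindricity, and then use Theorem~\ref{Thm:categorisedActions} together with non-triviality and the non--virtually-cyclic hypothesis to conclude non-elementarity. You simply spell out the verification of the hypotheses in a bit more detail than the paper's proof does.
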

\begin{proof}
    By Theorem~\ref{thm:QCFH} the action of $G$ on $T$ is finite height, and so recalling that minimal actions of hyperbolic groups on trees will always satisfy the other conditions of Lemma~\ref{lem:FHtoAA} we obtain that the action of $G$ on $T$ is acylindrical. Finally, the action will be non-elementary by Theorem~\ref{Thm:categorisedActions} and the fact that the action is non-trivial and $G$ is not virtually cyclic.
\end{proof}

\subsection{The Dunwoody--Sageev Resolution}
In this section, we introduce the main tool of the proof of Theorem~\ref{thm:mainintro}. We begin with the following illustrative example.
\begin{example}\label{Ex:F3}
    Consider the free group $G=F_3=\langle a, b, c\rangle$ on three generators, which is hyperbolic. We will construct an infinitely generated malnormal subgroup of $\langle a, b\rangle\cong F_2$ over which $G$ will split.
    
    Let $(n_i)_{i=1}^\infty$ be the sequence of natural numbers defined inductively by $n_1=1$, $n_{i+1}=456+8n_i^2$.
    Define $x_i=ab^{n_i}ab^{n_i+1}\cdots ab^{n_{i+1}-1}$, and $X_i\subset \langle a, b\rangle$ by $X_i=\{x_1,...,x_i\}$. By choice of the $n_i$s each $X_i$ has the property that $\langle a, b\mid X_i\rangle$ is a $C'(1/6)$ presentation, and so by \cite[Theorem~F]{Logan19} we have that for all $i$, $\langle X_i\rangle\leq \langle a, b\rangle$ is a malnormal free subgroup of $\langle a, b\rangle$ of rank $i$ whose basis is $X_i$. It follows that $H=\langle \bigcup_{i\in \mathbb{N}}X_i\rangle\leq \langle a, b\rangle$ is an infinitely generated malnormal subgroup of $\langle a, b\rangle$. 
    
    We may then decompose $F_3$ as the amalgam $\langle a, b\rangle*_H(H*\langle c\rangle)$. The subgroup $H$ is malnormal in $\langle a, b\rangle$, which is itself malnormal in $G$, so $H$ is malnormal in $G$ and thus has height $2$ in $G$. This decomposition therefore corresponds to a finite height splitting of a hyperbolic group over an infinitely generated edge group. In particular, by Lemma~\ref{lem:QClemma}(1), $H$ is not quasi-convex in $F_3$, and so there exists a finite height splitting of a hyperbolic group which is not a quasi-convex splitting.

    However, it is easy to see that $F_3$ does admit a quasi-convex splitting. Indeed, $F_3\cong \langle a, b\rangle *\langle c\rangle$, a decomposition over the trivial subgroup, and there exists a natural relationship between this splitting and the one considered above. Let $T_1$ be the Bass--Serre tree associated to the decomposition above, and let $T_2$ be the Bass--Serre tree associated to the splitting $F_3\cong\langle a, b\rangle *\langle c\rangle$. Then we may identify every edge in $E(T_1)$ with a coset of $H$, and every edge in $E(T_2)$ with a coset of the trivial subgroup. We use this identification to define the map $\phi:E(T_2)\rightarrow E(T_1)$ that sends the edge labelled by $g$ to the edge labelled by $gH$. This map extends uniquely to a surjective $G$-equivariant graph epimorphism $T_2\rightarrow T_1$.
\end{example}

We may always construct actions with finitely generated edge stabilisers from splittings of finitely presented groups using a construction known as the \emph{Dunwoody--Sageev resolution}.
\begin{theorem}\label{thm:dunwoody}\cite[Lemma~2.2]{DunwoodySageev99}
    If $G$ is a finitely presented group acting by simplicial isometry on a tree $T$ without inversion, then there exists a tree $T'$ such that $G$ acts on $T'$ by simplicial isometry and without inversion with finitely generated edge stabilisers, and such that there exists a $G$-equivariant graph epimorphism    
    \[\phi\colon T'\rightarrow T.\] 
\end{theorem}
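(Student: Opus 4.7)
The plan is to construct $T'$ from a presentation complex for $G$ together with an equivariant map to $T$, using Dunwoody's theory of tracks. First, I would fix a finite presentation 2-complex $X$ for $G$, and let $\tilde{X}$ denote its universal cover, on which $G$ acts freely and cocompactly. I would then build a $G$-equivariant continuous map $f \colon \tilde{X} \to T$: choose a vertex $v_0$ of $\tilde{X}$, send it to any vertex of $T$, extend equivariantly over the $G$-orbit of vertices, extend over each edge of $\tilde{X}$ by sending it to an edge path in $T$, and extend over each 2-cell using the contractibility of $T$ (the boundary of each 2-cell has finitely many $G$-translates, so this can be done equivariantly).

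Next, I would homotope $f$ so that it is transverse to the barycentres of edges of $T$ in the sense of Dunwoody: for each edge $e$ of $T$ with midpoint $m_e$, the preimage $f^{-1}(m_e)$ meets each 1-cell of $\tilde{X}$ in finitely many points and meets each 2-cell in a disjoint collection of properly embedded arcs. The resulting pattern $P := f^{-1}(\text{midpoints})$ is a $G$-invariant union of \emph{tracks} in $\tilde{X}$, where a track is a connected component of $P$. Because $\tilde{X}$ is simply connected and each track separates, the combinatorial object dual to $P$ — vertices being components of $\tilde{X}\setminus P$, edges being tracks — is a tree $T'$ on which $G$ acts simplicially and without inversion.

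The map $\phi \colon T' \to T$ is then essentially forced: each component of $\tilde{X} \setminus P$ maps via $f$ into the open star of a unique vertex of $T$, giving $\phi$ on vertices, and each track in $P$ maps into a unique open edge of $T$, giving $\phi$ on edges. Equivariance and surjectivity are immediate from the equivariance and surjectivity of $f$ (which can be arranged by a preliminary step, taking $T$ to be spanned by the image if necessary, and using that a minimal $G$-invariant subtree will be the full tree when we run the construction on the minimal subtree first).

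The main technical point — and the one I expect to require the most care — is verifying that the edge stabilisers of the action on $T'$ are finitely generated. The edge of $T'$ corresponding to a track $\tau \subset \tilde{X}$ has stabiliser $\Stab_G(\tau)$, and $\tau$ descends to a compact subcomplex $\bar{\tau}$ of the finite 2-complex $X$ (because $\bar{\tau}$ meets each cell of $X$ in a finite pattern of finitely many arcs, with only finitely many cells). Standard covering space theory then identifies $\Stab_G(\tau)$ with the image of $\pi_1(\bar{\tau})$ under the inclusion-induced map into $\pi_1(X) = G$; since $\bar{\tau}$ is a compact 1-complex, $\pi_1(\bar{\tau})$ is finitely generated and hence so is its image. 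The delicate step is ensuring that the transversality procedure can genuinely be carried out so that the tracks are compact after quotienting by $G$; this is where finite presentability is essential, as it guarantees the 2-complex $X$ is finite so there are only finitely many $G$-orbits of cells in which arcs of $P$ can live.
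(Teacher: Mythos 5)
Your proposal is a faithful reconstruction of Dunwoody's original track argument, which is precisely what the paper relies on: the theorem is cited to \cite{DunwoodyResolution} and only described in a sentence (pulling midpoints of edges of $T$ back to the Cayley/presentation $2$-complex to obtain tracks, then cutting along them), with no proof given in the paper itself. Your outline fills in that sketch along the same route — finite presentation $2$-complex, equivariant map $\tilde{X}\to T$, transversality to edge midpoints, the dual tree of the resulting pattern, and the covering-space identification of an edge stabiliser with the (finitely generated) image of $\pi_1$ of a compact track in the finite quotient complex — so no discrepancy to report.
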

In general, if we have two $G$-trees $T'$ and $T$ related by such an epimorphism
we say that $T'$ \emph{dominates} $T$. Dunwoody and Sageev's original proof is constructive, and works by pulling back the midpoints of edges of the tree $T$ onto the Cayley complex associated to some finite presentation of $G$ to give nicely embedded 1-complexes called \emph{tracks}, which we can cut along to give our new splitting. The exact construction will not be of use in this paper, but can be found in \cite{DunwoodySageev99}.
\section{Acylindrical Actions with Well Behaved Stabilisers}
We wish to use the Dunwoody--Sageev resolution to construct quasi-convex (or finite height with finitely generated edge stabilisers) splittings  of hyperbolic groups from acylindrical ones, and as such we are particularly interested in the conditions under which it will preserve acylindricity. 

\subsection{Local Properties of Groups}
The main result of this paper is to prove that if we are given some degree of control over the stabilisers of our action then the Dunwoody--Sageev Resolution will preserve acylindricity. In particular, we will seek to control the local properties of such stabilisers.

\begin{definition}
    Let $G$ be a group, and (P) a property of groups. We say that $G$ locally has property (P) if every finitely generated subgroup of $G$ has property (P).
\end{definition}
\begin{example}
    There are many examples of local properties in the literature, but we will focus on the following two. 
    \begin{enumerate}
        \item Finitely generated free groups, surface groups, and more generally hyperbolic limit groups \cite[Corollary~3.12]{Wilton08} and many small cancellation groups \cite{McCammondWise05, McCammondWise08} are \emph{locally quasi-convex}. It follows that splittings of such groups over finitely generated subgroups will always be quasi-convex splittings, and so if such a group admits any non-trivial action on a tree, then it will admit a quasi-convex splitting arising from the Dunwoody--Sageev resolution.
        \item We say that a group $G$ is \emph{locally finite height} if every finitely generated subgroup of $G$ has finite height in $G$. By Theorem~\ref{thm:QCFH} a locally quasi-convex group will always be locally finite height, but locally finite height groups form a much larger class as they are not required to be hyperbolic, or even finitely generated. For example, the free group on countably many generators is locally finite height owing to its inclusion into $F_2$, which is locally quasi-convex.
    \end{enumerate}
\end{example}

\subsection{Proof of Theorem~\ref{thm:mainintro}}
While requiring local properties on entire groups can be very restrictive, it is much less restrictive to require such properties on the stabilisers of an acylindrical action on a tree. The following is a reformulation of Theorem~\ref{thm:mainintro}, and is our main theorem.
\begin{theorem} \label{thm:main}
    Let $G$ be a finitely presented group with bounded finite subgroups that acts minimally, acylindrically and non-elementarily on a tree $T$. 
    
    \begin{enumerate}[a.]
        \item If all vertex stabilisers of the action of $G$ on $T$ are locally finite height then $G$ admits a non-elementary finite height acylindrical splitting with finitely generated edge stabilisers that dominates the action of $G$ on $T$.
        \item If there exists an edge stabiliser that is finite height in $G$, and this edge stabiliser is locally finite height, then $G$ admits a non-elementary finite height acylindrical splitting with finitely generated edge stabilisers. Furthermore, if every edge stabiliser satisfies this condition then we may assume that the new action dominates the action of $G$ on $T$.
    \end{enumerate}
\end{theorem}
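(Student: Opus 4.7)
The plan is to apply the Dunwoody Resolution (Theorem~\ref{thm:dunwoody}) to the given action on $T$, producing a $G$-tree $T'$ with finitely generated edge stabilisers and a $G$-equivariant graph epimorphism $\phi\colon T'\to T$; passing to the minimal $G$-invariant subtree via Lemma~\ref{lem:minimal} we may assume $T'$ has finitely many $G$-orbits of edges, and minimality of the action on $T$ ensures that the restriction of $\phi$ is still surjective, so the new action still dominates and inherits non-elementarity from the action on $T$. The essential feature of Dunwoody's construction that I will use is that the cutting tracks are preimages of midpoints of edges of $T$, so each $e'\in E(T')$ is sent by $\phi$ to an edge $\phi(e')\in E(T)$ rather than being collapsed; in particular $G_{e'}\leq G_{\phi(e')}$ for an edge of $T$. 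Once every such $G_{e'}$ is shown to have finite height in $G$, the action on $T'$ is a finite-height splitting with finitely generated edge stabilisers, and Lemma~\ref{lem:FHtoAA} upgrades it to an acylindrical action.

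For part (b) I would first prove the following transitivity lemma: if $L\leq K\leq G$ with $h_K(L)<\infty$ and $h_G(K)<\infty$, then $h_G(L)\leq h_K(L)\cdot h_G(K)$. The proof partitions $n$ essentially disjoint conjugates $\{g_iLg_i^{-1}\}$ by the coset $g_iK$: either some single coset contributes more than $h_K(L)$ indices, in which case the corresponding conjugates already have finite intersection inside one conjugate of $K$, or the $n$ indices split into more than $h_G(K)$ distinct cosets of $K$, so that $\bigcap g_iKg_i^{-1}$ (which contains $\bigcap g_iLg_i^{-1}$) is finite. Given this, the second assertion of (b) is immediate: each $G_{e'}\leq G_{\phi(e')}$ is finitely generated in the locally finite-height group $G_{\phi(e')}$, hence has finite height there, and then has finite height in $G$ by the lemma with $K=G_{\phi(e')}$. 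For the first assertion of (b), where only one edge stabiliser $G_e$ is assumed good, I would first collapse $T$ along all $G$-orbits of edges other than that of $e$ to obtain a non-trivial one-edge-orbit splitting of $G$ with edge stabiliser $G_e$, then apply Dunwoody to this collapsed tree and argue as above; this construction does not dominate $T$, matching the weaker conclusion stated.

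Part (a) reduces to the central claim that if $H$ is a finitely generated subgroup of a vertex stabiliser $G_v$ of $T$ with $h_{G_v}(H)=m<\infty$, then $h_G(H)<\infty$; given the claim, each $G_{e'}\leq G_{\phi(e')}\leq G_v$ for an endpoint $v$ of $\phi(e')$ inherits finite height in $G_v$ from local finite-height and then finite height in $G$ from the claim. To prove the claim I would argue by contradiction: if $g_1Hg_1^{-1},\ldots,g_nHg_n^{-1}$ are essentially disjoint with $K=\bigcap g_iHg_i^{-1}$ infinite, then $K$ pointwise fixes the convex hull $S\subseteq T$ of $\{g_iv\}$, and $(k,C)$-acylindricity forces $\diam(S)<k$ (otherwise $|K|\leq C$). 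Writing $g_i=g_1h_i$ with $h_i\in G_v$ for indices sharing the vertex $g_iv=g_1v$, the hypothesis $h_{G_v}(H)=m$ bounds the number of indices mapping to any one vertex of $S$ by $m$, so $S$ contains at least $n/m$ distinct vertices. The principal technical hurdle is then the case in which $\diam(S)<k$ yet $|V(S)|$ is very large, which may occur because vertices of $T$ can have infinite degree. I plan to handle this by extracting a high-degree vertex $u\in S$ via the tree bound $|V(S)|\leq\Delta^{k-1}$, then using that the $G_u$-action on the link of $u$ has only finitely many orbits (by minimality of the action on $T$) to find many essentially disjoint conjugates of a single edge stabiliser $G_{e_u}\leq G_u$ all containing $K$, and finally combining this with the containment $K\leq g_1Hg_1^{-1}$ in a finitely generated group together with local finite-height of $G_u$ applied to appropriate finitely generated subgroups to produce the required contradiction. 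Reconciling this tree-combinatorial information at $u$ with the algebraic local-finite-height hypothesis in the presence of potentially infinitely generated edge stabilisers of $T$ is the principal obstacle in the proof.
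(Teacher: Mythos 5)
Your part (b) is correct and takes a route that is equivalent to, but slightly more abstract than, the paper's. The transitivity estimate $h_G(L)\leq h_K(L)\cdot h_G(K)$ for a chain $L\leq K\leq G$ is exactly what the paper proves inline via a two-stage pigeonhole argument on sets of edges and their $\phi$-images; stating it as a lemma and applying it to $L=G_{e'}\leq K=G_{\phi(e')}$ is a clean reformulation of the same counting, and your remark on collapsing the non-good edge orbits also matches the paper's treatment (the paper cites a lemma from \cite{Cohen23} to guarantee the collapsed splitting remains minimal, acylindrical and non-elementary, a point you should verify rather than assert).

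Part (a), however, has a genuine gap, and the gap is structural, not merely technical. You reduce the problem to the ``central claim'' that an arbitrary finitely generated $H\leq G_v$ with $h_{G_v}(H)<\infty$ automatically has $h_G(H)<\infty$, and you attempt to prove this by examining the convex hull $S$ of $\{g_iv\}$ in $T$. The obstruction you identify --- a small-diameter $S$ that nonetheless has many vertices because a vertex of $T$ may have infinite degree --- is real, and your proposed repair goes through the edge stabilisers $G_{e_u}$ of $T$ incident to a high-degree vertex $u\in S$. This is the wrong place to look: under the hypotheses of part (a) those edge stabilisers may be infinitely generated, of infinite height in $G$, and not themselves locally finite-height, so ``many essentially disjoint conjugates of $G_{e_u}$ containing $K$'' gives you nothing. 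No combination with $K\leq g_1Hg_1^{-1}$ repairs this, because $K$ need not be finitely generated. The paper's proof is organised so as never to touch the edge stabilisers of $T$. It proves the action on $T'$ is acylindrical \emph{first}, by considering a long \emph{path} $P$ in $T'$ rather than a convex hull of vertex-translates in $T$. Because $P$ is a path, each of its vertices meets at most two of its edges, which yields the free combinatorial bound $|\phi^{-1}(v)\cap P|\geq \deg_{\phi(P)}(v)/2$ for every $v\in\phi(P)$. That bound disposes of the high-degree case directly: a vertex of $\phi(P)$ of large degree automatically has a large fibre in $P$, and the fibre consists of vertices of $T'$, whose stabilisers are finitely generated by Dunwoody's construction and hence finite-height in $G_v$ by the local hypothesis; no edge stabiliser of $T$ ever appears. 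Having established acylindricity of the $T'$-action, the paper then deduces finite-height of the splitting from the separate lemma that an acylindrical action with finitely many orbits of edges is a finite-height splitting if and only if each edge group is finite-height in both of its neighbouring vertex groups, which holds here since the $T'$-edge groups are finitely generated subgroups of the locally finite-height $T'$-vertex groups. Trying to prove the finite-height of each $G_{e'}$ in $G$ in one step, as you propose, forfeits the path structure that makes the high-degree case tractable, and your ``central claim'' in the generality you state it is likely not a true statement independent of the Dunwoody tree.
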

\begin{proof} 
    By definition~\ref{Def:kc} there exist constants $k>0$, $C>1$ such that the action of $G$ on $T$ is $(k, C)$-acylindrical. Furthermore, $G$ has bounded finite subgroups, so we may assume without loss of generality that $C=C_G$, where $C_G$ is the size of the largest finite subgroup of $G$.

    We begin with a proof of point (a). By Theorem~\ref{thm:dunwoody} there exists some tree $T'$ on which $G$ acts with finitely generated edge stabilisers (and therefore finitely generated vertex stabilisers by Lemma~\ref{lem:bieri}) such that $T'$ dominates $T$ along the $G$-equivariant epimorphism $\phi:T'\rightarrow T$. By Lemma~\ref{lem:minimal} the action of $G$ on $T'$ has a unique minimal subtree $T'_m$ with finitely many orbits of edges, and the image of the restriction of $\phi$ to $T'_m$ is a fixed subtree of $T$. This restriction must therefore be surjective by the fact that the action of $G$ on $T$ was minimal, and thus the action of $G$ on $T'_m$ dominates the action of $G$ on $T$. We will therefore assume that $T'=T'_m$ for the remainder of this proof, and we will show that $T'$ corresponds to a non-elementary finite height splitting with finitely generated edge stabilisers.

    Fix a vertex $v\in T$, and consider the set of vertices $U_v=\phi^{-1}(v)\subseteq V(T')$, which we may equip with a metric inherited from $T'$. By equivariance, $G_v=\Ps_G(v)$ acts on $U_v$ by isometry and the action has the following properties:
    \begin{itemize}
        \item[(1)] The $G$-stabiliser $\Ps_G(u)$ of an element $u\in U_v$ with respect to the $G$-action on $T'$ agrees with the $G_v$-stabiliser $\Ps_{G_v}(u)$ with respect to the $G_v$-action on $U_v$. In particular, the $G_v$-stabiliser of each element of $U_v$ is finitely generated; and
        \item[(2)] If $u, u'\in U_v$ are in the same $G$-orbit with respect to the $G$-action on $T'$ then they must be in the same $G_v$-orbit with respect to the $G_v$-action on $U_v$. In particular, the action of $G_v$ on $U_v$ has finitely many orbits.
    \end{itemize}

    Now let $u\in U_v$. The group $\Ps_{G_v}(u)$ is a finitely generated subgroup of $G_v$, so must have finite height in $G_v$ by the assumption that $G_v$ is locally finite height. Define $n_u$ to be the height of $\Ps_{G_v}(u)$ in $G_v$. Pick a set of representatives $S\subset U_v$ of the $G_v$-orbits in $U_v$, and define 
    \[k_v:=1+\sum_{u\in S}n_u,\]
    which is finite by point (2) above. Let $k_{\text{vert}}=\max_{v\in V(T)}(k_v)$, which exists as the action of $G$ on $T$ has finitely many orbits.

    Finally, let $P\subseteq T'$ be a path of length at least $k'=(2k_{\text{vert}})^k\cdot k_{\text{vert}}$. We then have two cases, the first of which occurs when $\phi(P)$ has diameter at least $K$. Then by the $(k, C)$-acylindricity of the action of $G$ on $T$ we have that $|\Ps_G(P)|\leq |\Ps_G(\phi(P)|\leq C$.

    The second case is where the interest lies, and occurs when the diameter of $\phi(P)$ is strictly less than $k$. We make the following claim.
    
    \begin{claim} \label{claim}
        There exists some vertex $v\in\phi(P)$ such that $|\phi^{-1}(v)\cap P|\geq k_{\text{vert}}$.
    \end{claim} 
    
    Indeed, for all $v\in \phi(P)$ we have that $|\phi^{-1}(v)\cap P|>\operatorname{deg}_{\phi(P)}(v)/2$, where $\operatorname{deg}_{\phi(P)}$ is the degree of the vertex $v$ when considered as a vertex of the finite graph $\phi(P)$. Thus, either $\phi(P)$ contains a vertex of degree at least $2k_{\text{vert}}$ and we are done, or no such vertex exists and $\phi(P)$ has degree bounded above by $2k_{\text{vert}}$. The graph $\phi(P)$ is a tree of diameter at most $k$, so it must then contain at most $(2k_{\text{vert}})^k$ vertices. But then by the pigeonhole principle there exists at least one vertex whose pre-image contains at least $k_{\text{vert}}$ vertices of $P$, so the claim is proven.

    Fix such a $v$. We have that $|\phi^{-1}(v)\cap P|>k_{\text{vert}}>k_v$, so by our choice of $k_v$ and by the pigeonhole principle there exists some orbit $G_v\cdot u$ of vertices in $\phi^{-1}(v)$ such that $|\phi^{-1}(v)\cap P\cap (G_v\cdot u)|\geq n_u$, the height of $\Ps_{G_v}(u)$ in $G_v$. It follows that, since each vertex in $\phi^{-1}(v)\cap P\cap (G_v\cdot u)$ is stabilised (with respect to the $G$-action on $T'$) by a $G_v$-conjugate of $\Ps_{G_v}(u)$, the pointwise $G$-stabiliser of this set must be finite, and therefore so must the $G$-stabiliser $\Ps_G(P)$ of $P$. 

    The action of $G$ on $T'$ is therefore $(k', C_G)$-acylindrical, and thus it only remains to show that the action of $G$ on $T'$ is a finite height splitting. For all $v\in V(T')$ the stabiliser $\Ps_G(v)$ is a subgroup of the stabiliser $\Ps_G(\phi(v))$, and so in particular is locally finite height. Thus, as each edge stabiliser is finitely generated it must have finite height in both of its neighbours, and the action of $G$ on $T'$ will be a finite height splitting by Lemma~\ref{LFHtoFH} as required. It only remains to check that the action of $G$ on $T'$ is non-elementary. The group $G$ is acylindrically arboreal by assumption, so $G$ is not virtually cyclic and the action of $G$ on $T'$ cannot be lineal by Theorem~\ref{Thm:categorisedActions}. The action of $G$ on $T'$ can also have no fixed points, as it dominates the action of $G$ on $T$ with has no fixed points, and so the action of $G$ on $T'$ will be non-elementary by Theorem~\ref{Thm:categorisedActions} as required.

    \bigskip We now move onto a proof of part b of this theorem. We begin by modifying $T$ by collapsing the quotient graph of groups to consider only the splitting over the edges $e\in E(\Gamma)$ with the properties required in the hypothesis of part b, and call this new graph of groups $(\Gamma_c, \mathfrak{G}_c)$. This will still correspond to a minimal non-elementary acylindrical splitting by \cite[Lemma~2.20]{Cohen25}, with associated Bass-Serre tree $T_c$. Construct the tree $T'$ as before using Theorem~\ref{thm:dunwoody} applied to $T_c$, which we will again assume to be minimal with a finite number of orbits of edges. Therefore $T'$ will dominate our original splitting if no edge of $\Gamma$ was collapsed, as required. Again as before, fix some $e\in E(T_c)$, and consider the set of edges $S_e=\phi^{-1}(e)\subseteq E(T')$, which we may equip with a metric given by considering the distance between their midpoints in $T'$. By equivariance, $G_e=\Ps_G(e)$ acts on $S_e$ by isometry and the action has the properties analogous to property (1) above.

    Now let $f\in E(T')$. By property (1), the stabiliser $\Ps_G(f)$ agrees with the stabiliser $\Ps_{G_{\phi(f)}}(f)$, which is therefore a finitely generated subgroup of $G_{\phi(f)}$ and thus has finite height in $G_{\phi(f)}$. In turn, $G_{\phi(f)}$ has finite height in $G$ by hypothesis, and so by transitivity of the finite height property the stabiliser $\Ps_G(f)$ must have finite height in $G$. The action of $G$ on $T'$ is therefore a finite height splitting, and will therefore be acylindrical by Lemma~\ref{lem:FHtoAA}. Finally, the action of $G$ on $T'$ will be non-elementary as required as in the proof of part a.

\end{proof}
The power of this Theorem~\ref{thm:main} lies in the fact that if $G$ is a group (even one that is not finitely presented or even finitely generated) acting on a tree $T$ and this action satisfies the hypotheses of Theorem~\ref{thm:main} then the induced action of any finitely presented subgroup of $G$ on $T$ will often satisfy the same hypotheses. We have the following corollary.
\begin{corollary}\label{cor:main}
    Let $G$ be a group acting acylindrically and non-elementarily on a tree $T$, and $H$ a finitely presented subgroup of $G$ with bounded finite subgroups acting non-trivially on $T$ such that $H$ is not virtually cyclic. Let $T_H$ be the minimal subtree of the induced action of $H$ on $T$ guaranteed by Lemma~\ref{lem:minimal}.
    \begin{enumerate}[a.]
        \item If all vertex stabilisers of the action of $G$ on $T$ are locally finite height, then $H$ admits a non-elementary finite height acylindrical splitting with finitely generated edge stabilisers that dominates the induced action of $H$ on $T_H$.
        \item If there exists an edge $e\in E(T_H)$ whose $G$-stabiliser (with respect to the action of $G$ on $T$) has finite height in $G$, and this edge stabiliser is locally finite height, then $H$ admits a non-elementary finite height acylindrical splitting with finitely generated edge stabilisers. Furthermore, if every edge $e\in E(T_H)$ satisfies this condition then we may assume that the new action dominates the induced action of $H$ on $T_H$.
    \end{enumerate}
\end{corollary}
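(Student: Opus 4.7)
The plan is to apply Theorem~\ref{thm:main} directly to the induced action of $H$ on $T_H$. Most of the hypotheses transfer easily; the genuine content lies in showing that the local finite-height conditions descend from $G$-stabilisers to $H$-stabilisers. First I would handle the bookkeeping: $T_H$ is minimal by definition and has finitely many orbits of edges by Lemma~\ref{lem:minimal}, and pointwise $H$-stabilisers sit inside pointwise $G$-stabilisers, so the $H$-action on $T_H$ inherits $(k,C)$-acylindricity from that of $G$ on $T$. Because $H$ is not virtually cyclic the action cannot be lineal, and the assumption that $H$ acts non-trivially rules out a global fixed point on the minimal subtree, so Theorem~\ref{Thm:categorisedActions} forces the action to be non-elementary.

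For part~(a), I would fix a vertex $v \in V(T_H)$ and set $H_v = H \cap \Ps_G(v)$. Given a finitely generated subgroup $K \leq H_v$, $K$ is also a finitely generated subgroup of $\Ps_G(v)$, which is locally finite-height by hypothesis, so $K$ has some finite height $n$ in $\Ps_G(v)$. The claim is that $K$ has height at most $n$ in $H_v$ as well: taking $n$ essentially disjoint $H_v$-conjugates $h_1 K h_1^{-1},\ldots,h_n K h_n^{-1}$, the key observation is that if $h_i K = h_j K$ as cosets of $K$ in $\Ps_G(v)$ for some $i \neq j$, then $h_j^{-1} h_i \in K \subseteq H_v$, contradicting essential disjointness in $H_v$. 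Hence these conjugates remain essentially disjoint in $\Ps_G(v)$, and their intersection is finite.

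For part~(b), the analogous descent argument shows that $H_e = H \cap G_e$ is locally finite-height whenever $G_e$ is. A second descent of the same flavour then establishes that $H_e$ has finite height in $H$: taking $n = h_G(G_e)$ essentially disjoint $H$-conjugates $h_i H_e h_i^{-1}$, one verifies that the corresponding $h_i G_e h_i^{-1}$ remain essentially disjoint as $G$-conjugates of $G_e$ (otherwise $h_j^{-1} h_i \in G_e \cap H = H_e$), so their intersection is finite and so is $\bigcap_i h_i H_e h_i^{-1}$. With all hypotheses of Theorem~\ref{thm:main} verified, applying it to the $H$-action on $T_H$ produces the required non-elementary finite-height splitting with finitely generated edge stabilisers, and the dominance statement is inherited directly from the corresponding conclusion of Theorem~\ref{thm:main}.

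The main obstacle is this essential-disjointness bookkeeping — the subtle point is that passing from $H$ to the larger group $G$ (or from $H_v$ to $G_v$) could in principle collapse the essential-disjointness condition on conjugates, but the observations $K \cap H_v = K$ and $G_e \cap H = H_e$ make the descent automatic in both directions. Everything else reduces mechanically to the main theorem.
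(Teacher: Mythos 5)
Your proof is correct and takes essentially the same route as the paper: verify the hypotheses of Theorem~\ref{thm:main} for the induced $H$-action on $T_H$ and apply it. The paper states only that the hypotheses transfer (flagging the non-elementarity check via Theorem~\ref{Thm:categorisedActions}); you supply the missing detail of why the local finite-height and finite-height conditions descend to $H$-stabilisers, and your essential-disjointness bookkeeping (using $K\cap H_v = K$ and $G_e\cap H = H_e$ to see that distinct $H$-cosets remain distinct as $G$-cosets) is exactly the right mechanism.
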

\begin{proof}
    If the hypotheses of part~(a) of this corollary hold then the action of $H$ on $T_H$ will satisfy the hypotheses of Theorem~\ref{thm:main} part~(a), with the non-elementary condition arising from Theorem~\ref{Thm:categorisedActions} and the facts that $H$ is not virtually cyclic and the action of $H$ on $T$ is non-trivial.

    Similarly, if the hypotheses of part~(b) of this corollary hold then the action of $H$ on $T_H$ will satisfy the hypotheses of Theorem~\ref{thm:main}(b).
\end{proof}

\subsection{Application to Hyperbolic One-Relator Groups} \label{sec:Applications}
We now apply the methods used in the proof of Theorem~\ref{thm:main} to Question~\ref{Q1} via one-relator groups.
\begin{definition}
 Let $G$ be a one-relator group, and $\langle X\mid w\rangle$ a one-relator presentation for $G$ with $w$ cyclically reduced as a word over $X$. We say that a subgroup $H\leq G$ is a \emph{Magnus subgroup with respect to this presentation} if there exists some subset $Y\subset X$ such that $w\notin\langle Y\rangle< F(X)$, and $Y$ generates $H$ in $G$. We say that a subgroup is a \emph{Magnus subgroup} if it is a Magnus subgroup relative to some one-relator presentation of $G$.
\end{definition}
    
One of the oldest and most powerful methods with which to understand one-relator groups is with a \emph{Magnus hierarchy}.

\begin{definition}\label{def:Magnus}
    Let $G$ be a one-relator group. We say that $G_0\hookrightarrow G_1\hookrightarrow\cdots\hookrightarrow G_n\cong G$ is a \emph{Magnus hierarchy} for $G$ if:
    \begin{enumerate}[(M1)]
    \item Each $G_i$ is a one-relator group, and $G_0$ is isomorphic to the free product of finitely many cyclic groups, and;
    \item for each $i$, $G_{i+1}$ is an HNN-extension of $G_i$ along the identification of two subgroups that are both Magnus subgroups with respect to the same presentation of $G_i$.
    \end{enumerate}
\end{definition}
In the language of Definition~\ref{def:hier}, a Magnus hierarchy for a one-relator group $G$ can be identified with a hierarchy decomposition of length $n+1$:
\begin{itemize}
    \item For all $i\neq 0$, set $J_i=\{1\}$ and let $G_{i, 1}\cong G_{i-1}$. Then for $i\geq 2$, $T_{i, 1}$ may be chosen to be the $HNN$-extension decomposition of $G_{i-1}$ with edge stabilisers isomorphic to $G_{i-1}$ coming from the Magnus hierarchy. 
    \item The group $G_{1, 1}\cong G_0$ is the free product of finitely many cyclic groups, so acts on a tree with finite vertex stabilisers by \cite{KarasPietrowskiSolitar73}. We may therefore choose $T_{1, 1}$ to be this tree to satisfy the condition in Definition~\ref{def:hier} that all groups $G_{0, j}$ for $j\in J_0$ must be finite.
\end{itemize} 

\begin{theorem}(\cite{Masters06}, see also \cite{Magnus30, Moldavanskii67})
    Let $G$ be a one-relator group. Then $G$ admits a Magnus hierarchy.
\end{theorem}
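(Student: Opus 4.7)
The plan is to proceed by induction on a suitable complexity measure of the relator $w$ in a presentation $G \cong \langle X \mid w\rangle$, typically a pair such as the number of distinct generators appearing in $w$ together with the total cyclically reduced length of $w$, ordered lexicographically. The base case handles the situation when $w$ is trivial or involves only one generator $x \in X$: then $G \cong F(X\setminus\{x\}) * \langle x \mid w\rangle$ is a finite free product of cyclic groups, so the hierarchy has length $n=0$ with $G_{0,1} \cong G$, meeting (M1)--(M3) immediately.

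For the inductive step, I first adjust the presentation via a Nielsen change of generators (using a Bezout identity in the exponent sums of two generators appearing in $w$) to arrange that some generator $t \in X$ has exponent sum zero in $w$. Then I apply the Magnus rewriting procedure: setting $x_{i,j} := t^{j} x_i t^{-j}$ for each $x_i \in X \setminus\{t\}$ and $j \in \Z$, the relator $w$ rewrites as a word $w'$ in finitely many of the symbols $x_{i,j}$, say with $m_i \leq j \leq M_i$ for each $i$. Define the one-relator group
\[
G' := \langle\, x_{i,j} : m_i \leq j \leq M_i \,\mid\, w' \,\rangle.
\]

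The original group $G$ is then recovered as the HNN extension of $G'$ with stable letter $t$, identifying the subgroups
\[
H_- := \langle x_{i,j} : m_i \leq j \leq M_i - 1\rangle, \qquad H_+ := \langle x_{i,j} : m_i + 1 \leq j \leq M_i\rangle
\]
via the rule $t\, x_{i,j}\, t^{-1} = x_{i,j+1}$. Since $w'$ must use some $x_{i, M_i}$ and some $x_{i', m_{i'}}$ by maximality of the indices, the Freiheitssatz confirms $w' \notin \langle H_-\rangle$ and $w' \notin \langle H_+\rangle$, so $H_\pm$ are genuine Magnus subgroups of $G'$ with respect to the same presentation, as (M2) requires. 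That this HNN extension recovers $G$ is a direct Tietze transformation check, inverse to the substitution $x_{i,j} \mapsto t^j x_i t^{-j}$.

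The main obstacle is showing that the complexity of $w'$ inside $G'$ is strictly smaller than that of $w$ inside $G$, so that the induction terminates. This is the heart of the classical Magnus--Karrass--Solitar analysis: although $w'$ may have larger total length than $w$, the presentation of $G'$ eliminates the generator $t$ entirely (at the cost of introducing the indexed family $x_{i,j}$ of which only finitely many appear), so any complexity measure sensitive to the number of generators in $X$ occurring with nonzero exponent sum produces a strict decrease. Applying the inductive hypothesis to $G'$ yields a Magnus hierarchy $G_0 \hookrightarrow \cdots \hookrightarrow G_{n-1} \cong G'$, and appending the HNN step $G' \hookrightarrow G$ produces the desired hierarchy for $G$; if the inductively obtained $G_0$ is not already expressed as a free product of cyclic groups, one final step breaking it into free factors aligns the result with Definition~\ref{def:Magnus}.
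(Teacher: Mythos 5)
The paper gives no proof of this theorem; it is cited to \cite{Masters06}, \cite{MagnusSplitting}, and \cite{Moldavanskii}, and your sketch is indeed the classical Magnus--Moldavanskii rewriting argument those sources use, so the overall strategy matches. However, the termination argument you give is not correct as stated, and this is the one place where the classical proof actually requires care. You propose that ``any complexity measure sensitive to the number of generators in $X$ occurring with nonzero exponent sum produces a strict decrease.'' This fails: the stable letter $t$ you eliminate was \emph{already} arranged to have exponent sum zero, so removing it does not shrink that count, and the replacement of each $x_i$ by the family $\{x_{i,j}\}_{m_i \le j \le M_i}$ can \emph{increase} the number of generators appearing with nonzero exponent sum in $w'$ (the exponent sum of $x_i$ in $w$ only controls the \emph{sum} over $j$ of the exponent sums of the $x_{i,j}$ in $w'$, not the individual summands). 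The quantity that actually decreases under the rewriting step is the length of the relator: when $t$ appears in $w$ with exponent sum zero, one has $|w'| = |w| - (\text{number of occurrences of } t^{\pm1} \text{ in } w) < |w|$. The genuinely delicate point, which your sketch passes over, is the case where every generator appearing in $w$ has nonzero exponent sum: the preliminary Nielsen move you invoke can increase the length of the relator, so a naive induction on $|w|$ does not immediately close either. The classical treatments handle this either by a more careful accounting showing the \emph{subsequent} rewriting still drops below the original length, or by replacing the Nielsen automorphism with an embedding of $G$ into an auxiliary one-relator group in which a zero-exponent-sum generator is available. As written, your proof has a gap precisely at the step you flag as ``the heart of the analysis,'' and the specific complexity measure you offer does not repair it.
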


Returning to acylindricity, we have the following powerful result due to Linton, where the definition of $\Z$-stability can be found in \cite[Section~1.3]{Linton25}.

\begin{theorem}\label{thm:Linton}(\cite[Theorem~7.1]{Linton25})
    Let $G$ be a one-relator group and $G_0\hookrightarrow G_1\hookrightarrow\cdots\hookrightarrow G_n\cong G$ a Magnus hierarchy for $G$. Then the following are equivalent.
    \begin{enumerate}
        \item The hierarchy $G_0\hookrightarrow G_1\hookrightarrow\cdots\hookrightarrow G_n\cong G$ is an acylindrical hierarchy, i.e every HNN-extension in the hierarchy is an acylindrical splitting.
        \item The group $G$ is hyperbolic and the hierarchy $G_0\hookrightarrow G_1\hookrightarrow\cdots\hookrightarrow G_n\cong G$ is a quasi-convex hierarchy, i.e every HNN-extension in the hierarchy is a quasi-convex splitting.
        \item The group $G$ has no Baumslag--Solitar subgroups and the hierarchy $G_0\hookrightarrow G_1\hookrightarrow\cdots\hookrightarrow G_n\cong G$ is $\Z$-stable.
    \end{enumerate}
\end{theorem}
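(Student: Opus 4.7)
The plan is to prove the three-way equivalence by establishing the cycle $(2)\Rightarrow(1)\Rightarrow(3)\Rightarrow(2)$. The first implication is essentially formal from the preliminaries of this paper; the third is the genuine heart of the theorem and requires the Bestvina--Feighn combination theorem plus the fine structure of Magnus subgroups; the middle implication is a translation between the dynamic and combinatorial languages.

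For $(2)\Rightarrow(1)$, suppose each $G_i$ is hyperbolic and each HNN-extension $G_i\hookrightarrow G_{i+1}$ is a quasi-convex splitting. A hyperbolic group always has bounded finite subgroups (Bogopolski) and every HNN-extension is a one-edge splitting, so the quasi-convex edge group has finite-height in $G_{i+1}$ by Theorem~\ref{thm:QCFH}, and Lemma~\ref{lem:FHtoAA} converts this into acylindricity.

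For $(1)\Rightarrow(3)$, I would first exclude Baumslag--Solitar subgroups: any $BS(m,n)$ with $|m|\neq|n|$ would give an element $t$ and a non-trivial $a$ with $ta^mt^{-1}=a^n$, so iterated conjugates of $a^{mn}$ would pointwise fix arbitrarily long axis-segments in the Bass--Serre tree of the relevant level, contradicting acylindricity at that level; for $|m|=|n|$ the $\langle a\rangle$-orbit in the tree either is bounded (forcing a finite-order relation incompatible with $BS$ once one lifts to $G$) or gives a $\mathbb{Z}^2$ fixing a ray, again contradicting acylindricity via Lemma~\ref{lem:AAnormal}. Then $\Z$-stability should be extracted by translating acylindricity combinatorially: applying Britton's lemma along a long geodesic of stable letters whose edges are all fixed produces exactly the kind of bounded-length relator pattern that is ruled out by the $\Z$-stability criterion in \cite{Linton22}, so contrapositively acylindricity yields $\Z$-stability.

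The main work is in $(3)\Rightarrow(2)$, which I would do by induction along the hierarchy, starting from the base $G_0$ (a free product of cyclics, manifestly hyperbolic with the Magnus subgroups appearing free and quasi-convex). Assuming $G_i$ is hyperbolic and every Magnus subgroup used in forming $G_{i+1}$ is quasi-convex free in $G_i$, I would apply the Bestvina--Feighn combination theorem to the HNN-extension $G_{i+1}=G_i*_M$: the edge groups are quasi-isometrically embedded by induction, so it remains to verify the annuli flare (or hallways flare) condition for $G_{i+1}$. This is precisely where $\Z$-stability and the absence of $BS$-subgroups feed in: $\Z$-stability prevents the existence of non-flaring annuli (which would otherwise manifest themselves as an unbounded sequence of conjugations concentrating along a fixed cyclic subgroup), while excluding $BS(m,n)$ rules out the degenerate exponentially-non-flaring case. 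The combination theorem then yields hyperbolicity of $G_{i+1}$, together with quasi-isometric embedding of the vertex group $G_i$; that the edge subgroups $M$ remain quasi-convex in $G_{i+1}$ follows because they are already quasi-convex in $G_i$ and $G_i\hookrightarrow G_{i+1}$ is a quasi-isometric embedding. Finally, one must verify that the new Magnus subgroups appearing at the next level of the hierarchy are quasi-convex in $G_{i+1}$; for this I would use Newman's spelling theorem and Magnus's Freiheitssatz to show that any Magnus subgroup of a hyperbolic one-relator group is quasi-geodesically embedded (i.e.\ lies inside the quasi-convex vertex group $G_i$ after a change of presentation, if necessary), allowing the induction to continue.

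The principal obstacle is the verification of the annuli flare condition from $\Z$-stability; this is a careful metric argument in the Cayley complex of $G_i$ that couples the combinatorial definition of $\Z$-stability with the geometry of geodesic annuli inside the mapping torus of the Magnus subgroup inclusion, and is where I would expect to spend the bulk of the technical effort, paralleling the analogous arguments in Bestvina--Feighn and in Wise's work on one-relator groups with torsion.
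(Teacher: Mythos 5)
This statement is not proved in the paper: it is Linton's Theorem~7.1, imported verbatim via the citation \cite[Theorem~7.1]{Linton22}, so there is no internal argument against which to compare your attempt. Taken as a sketch of Linton's actual proof, your outline captures the coarse structure --- the soft direction $(2)\Rightarrow(1)$ via Theorem~\ref{thm:QCFH} and Lemma~\ref{lem:FHtoAA}, and the real content in $(3)\Rightarrow(2)$ being a Bestvina--Feighn combination theorem argument where $\Z$-stability supplies flaring --- but several steps are underspecified in ways that matter. In $(1)\Rightarrow(3)$, the Baumslag--Solitar exclusion via axis segments in a single Bass--Serre tree has a gap: a $BS(m,n)$ subgroup could be elliptic at every level, in which case it produces no long pointwise-fixed paths at all; one has to push it down the hierarchy to $G_0$, a free product of cyclics (hence virtually free and $BS$-free), to force non-ellipticity at some level before an acylindricity argument can bite. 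Likewise, the claim that acylindricity ``translates combinatorially'' into $\Z$-stability by applying Britton's lemma along a long fixed geodesic is not a formal consequence --- in Linton's paper this equivalence passes through the machinery of one-relator towers. Finally, in $(3)\Rightarrow(2)$ the flaring verification, which you correctly identify as the technical heart, is only gestured at; the actual argument is carried out via annular diagrams over one-relator presentations and the tower structure of Magnus subgroups, rather than a direct metric computation in the Cayley complex, and the quasi-convexity of the \emph{new} Magnus subgroups at each level (your last inductive step) is itself one of the harder points of Linton's analysis and cannot be dispatched with the Freiheitssatz and spelling theorem alone.
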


Now let $H$ be a finitely generated subgroup of a one-relator group $G$ with an acylindrical Magnus hierarchy. The group $H$ will admit an acylindrical action on a tree arising from the hierarchy for $G$. However, there is no guarantee that such a splitting will have finitely generated edge stabilisers. Consider the following.

\begin{example} \label{ex:BNS}
    Let $G$ be the two-generator one-relator group with the presentation \[G=\langle a, b\mid a^2ba^{-1}b^2a^{-2}ba^3b^{-2}a^{-1}ba^{-2}b^{-2}aba^{-1}b^{-1}ab^{-1}\rangle.\] It may be shown by direct calculation that this presentation is $C^\prime (1/6)$, and thus $G$ is hyperbolic. Using the homomorphism $G\rightarrow\Z$ that sends $a$ to $1$ and $b$ to $0$ we may express $G$ as the HNN-extension of the one-relator group \[G_1=\langle c, d, e, f\mid fe^2cf^{-2}ec^{-2}dc^{-1}d^{-1}\rangle\] along the subgroup isomorphism $\phi_1:\langle c, d, e\rangle\mapsto\langle d, e, f\rangle $, and in turn we may use the homomorphism $G_1\rightarrow\Z$ that sends $b$ to $1$ and all other generators to $0$ to express $G_1$ as the HNN-extension of the free group \[G_2= \langle g, h, i, k, l, m\mid ih^2gi^{-2}hg^{-2}k\rangle \cong F(g, h, i, l, m)\] along the subgroup isomorphism $\phi_2:\langle g, h, i\rangle\mapsto\langle k, l, m\rangle $. This is therefore a Magnus hierarchy for $G$, which we claim is acylindrical. Indeed, one may verify by hand or using the pseudo-algorithm of Linton \cite[Proposition~6.1.7]{LintonThesis} that both extensions are $\Z$-stable, and the fact that $G$ is hyperbolic implies that it is Baumslag-Solitar free. Thus, by Theorem~\ref{thm:Linton} this hierarchy is acylindrical, and is in turn a quasi-convex hierarchy for $G$.

    Let $A$ be some edge group of the first splitting in this hierarchy, so by construction the normal closure of $A$ is the kernel of the homomorphism $G\rightarrow\Z$ that sends $a$ to $1$ and $b$ to $0$. One may verify that this is not finitely generated using Brown's algorithm \cite[Theorem~4.3]{BrownBNS}. However, the group $G$ does fibre --- by the same algorithm the homomorphism $\chi:G\rightarrow \Z$ that sends $a$ to $1$ and $b$ to $-1$ has finitely generated kernel, which we will call $F$. 
    
    Consider now the intersection of $F$ with any $G$-conjugate $A^g$ of $A$. We must have that $A^g\not\subset F$ because the normal closure of $A$ is the infinitely generated kernel of a map to $\Z$, and so the quotient of $A^g$ by $F\cap A^g$ must be isomorphic to $\Z$. It follows that, since $A^g$ is free of rank $3$, $F\cap A^g$ cannot be finitely generated. The splitting of $F$ induced by the above hierarchy therefore does not have finitely generated edge groups, but will be acylindrical (and indeed finite height by Theorem~\ref{thm:Linton}).
\end{example}
While the group $F$ in Example~\ref{ex:BNS} is a non-elementary free group, so naturally admits a free splitting, the above argument illustrates that acylindrical graphs (or hierarchies) of groups, even in the relatively tame case of one-relator groups, may not induce splittings with finitely generated edge groups on their subgroups. However, we may still deal with such subgroups using methods from the proof of Theorem~\ref{thm:main}. We have the following corollary, which is a reformulation of Corollary~\ref{cor:introStrong}.

\begin{corollary}
    Let $G$ be a hyperbolic group, and let $H$ be a hyperbolic subgroup of $G$.
    
    \begin{enumerate}[(a)]
        \item If $G$ admits an action on a tree $T$ that is a quasi-convex splitting over a locally quasi-convex group, and $H$ the induced action of $H$ on $T$ is non-trivial, then $H$ admits a non-trivial quasi-convex splitting that dominates the minimal subtree of the induced action of $H$ on $T$.
        \item If $G$ admits a quasi-convex hierarchy with locally quasi-convex edge stabilisers then $H$ admits a quasi-convex hierarchy.
    \end{enumerate}
\end{corollary}
\begin{proof}
    We begin with part a. Let $T$ be a tree on which $G$ acts such that the action of $G$ on $T$ is a quasi-convex splitting, and let $T_H$ be the minimal subtree of the induced action of $H$ on $T$ guaranteed by Lemma~\ref{lem:minimal}.  By Theorem~\ref{thm:dunwoody} there exists some tree $T'$ on which $H$ acts with finitely generated edge stabilisers such that $T'$ dominates $T_H$ along the $H$-equivariant epimorphism $\phi:T'\rightarrow T_H$. By Lemma~\ref{lem:minimal} the action of $H$ on $T'$ has a unique minimal subtree $T'_H$ with finitely many orbits of edges, and the image of the restriction of $\phi$ to $T'_H$ is a fixed subtree of $T_H$. This restriction must therefore be surjective by the fact that the action of $H$ on $T_H$ was minimal, and thus the action of $H$ on $T'_H$ dominates the action of $G$ on $T$. As in the proof of Theorem~\ref{thm:main}, fix some $e\in E(T_H)$ and consider the set of edges $S_e=\phi^{-1}(e)\subseteq E(T_H')$, which we may equip with a metric given by considering the distance between their midpoints in $T_H'$. By equivariance, $H_e=\Ps_H(e)$ acts on $S_e$ by isometry and this action has the properties analogous to property (1) in the proof of Theorem~\ref{thm:main}(a). In particular, the $H$-stabiliser $\Ps_H(f)$ of an element $f\in S_e$ with respect to the $H$-action on $T'_H$ agrees with the $H_e$-stabiliser $\Ps_{H_e}(f)$ with respect to the $H_e$-action on $S_e$. 
    
    The $H$-stabiliser of each element of $f\in E(T_H')$ is therefore a finitely generated subgroup of $H\cap G_{\phi(e)}$, where $G_{\phi(e)}=\Ps_G(e)$ is the $G$-stabiliser of $e$ with respect to the $G$-action on $T$. Furthermore, by assumption on the action of $G$ on $T$, $G_e$ is a locally quasi-convex quasi-convex subgroup of $G$, so every finitely generated subgroup of $G_e$ is a quasi-convex subgroup of $G$ by transitivity of quasi-convexity. It therefore follows that each stabiliser of the action of $H$ on $T_H'$ is a quasi-convex subgroup of $G$, so is a quasi-convex subgroup of each hyperbolic subgroup of $G$. In particular, each stabiliser of the action of $H$ on $T_H'$ is a quasi-convex subgroup of $H$ as required.

    \bigskip We now proceed with part b. Let $n$ a non-negative integer, and for all $0\leq i< n$ let $J_i$ be a finite indexing set with $J_n=\{1\}$. Let $\{G_{i, j}\}_{0\leq i\leq n, j\in J_i}$ be the set of groups in a quasi-convex hierarchy decomposition for $G$ of length $n$, as in Definition~\ref{def:hier}. We may assume without loss of generality that $H$ is not isomorphic to a subgroup of any element of $\{G_{i, j}\}_{0\leq i\leq n, j\in J_i}$ other than $G_{n, 0}$, and proceed by induction on $n$. First assume $n=0$. Then $G$ is finite, and so $H$ is finite and admits a quasi-convex hierarchy of length $0$ as required.

    Now assume that the above result holds for all hyperbolic subgroups of hyperbolic groups $G$ admitting a quasi-convex hierarchy  of length at most $k-1$ with locally quasi-convex edge stabilisers, and assume that $n=k$. By property (H3) in Definition~\ref{def:hier}, $G\cong G_{n, 1}$ admits an action on a tree $T$ with quasi-convex edge stabilisers and with vertex stabilisers of the form $G_{n-1, k}$ for some $k\in J_{n-1}$, and by assumption $H$ is not isomorphic to a subgroup of any vertex stabiliser of this action, so $H$ acts non-trivially on $T$. We may therefore apply part a of this corollary to see that $H$ admits a quasi-convex splitting that dominates the minimal subtree of the induced action of $H$ on $T$. The vertex stabilisers of this splitting will be subgroups of the vertex stabilisers of the action of $G$ on $T$ as above, and will be quasi-convex in $H$ by Lemma~\ref{lem:BowditchVertex} and thus hyperbolic by Lemma~\ref{lem:QClemma}(a). All vertex stabilisers of the action of $G$ on $T$ are of the form $G_{n-1, k}$ for some $k\in J_{n-1}$, and thus admit quasi-convex hierarchies of length at most $n-1$, and so it follows by the inductive hypothesis that each stabiliser of the action of $H$ on $T_H'$ admits a quasi-convex hierarchy. It follows that $H$ must admit a quasi-convex hierarchy as required.
\end{proof}
The following corollary  is a reformulation of Corollary~\ref{cor:intro}, and therefore answers Question~\ref{Q1} in the affirmative for all finitely generated subgroups of one-relator groups with acylindrical hierarchies.
\begin{corollary}
    Let $H$ be a finitely generated subgroup of a one-relator group $G$ such that $G$ admits an acylindrical Magnus hierarchy $G_0\hookrightarrow G_1\hookrightarrow\cdots\hookrightarrow G_n\cong G$. Then $H$ admits a quasi-convex hierarchy. In particular the group $H$ is acylindrically arboreal if and only if it admits a non-elementary quasi-convex splitting.
\end{corollary}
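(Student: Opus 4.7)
The plan is to induct on the length $n$ of the acylindrical Magnus hierarchy of $G$, producing the desired hierarchy of $H$ by applying Corollary~\ref{cor:main}(b) at each inductive step to the induced action of $H$ on the top-level Bass--Serre tree. Three key facts supply the hypotheses of the corollary: (i) by Theorem~\ref{thm:Linton}, $G$ is hyperbolic and the Magnus hierarchy is in fact quasi-convex, so each Magnus edge stabiliser has finite height in $G$ by Theorem~\ref{thm:QCFH}; (ii) each Magnus subgroup is free by the Freiheitssatz, and finitely generated subgroups of free groups are quasi-convex by Hall's theorem \cite{Hall_1949} and hence have finite height by Theorem~\ref{thm:QCFH}, so Magnus subgroups are locally finite-height; (iii) $H$ is finitely presented and hyperbolic by coherence of one-relator groups \cite[Theorem~1.1]{Coherence1r} and \cite[Theorem~5.4]{Gerston}, and has bounded finite subgroups as a subgroup of the hyperbolic group $G$.

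The base case $n=0$ has $G\cong G_0$ a finite free product of cyclic groups, so Kurosh's subgroup theorem expresses $H$ as a finite free product of cyclic groups and yields a length-one hierarchy with trivial (hence finitely generated, finite-height) edge stabilisers and cyclic vertex groups. For the inductive step, $G\cong G_{n-1}*_{M_{n-1}}$, and we consider the induced action of $H$ on the Bass--Serre tree $T$. If $H$ fixes a vertex of $T$ then $H$ is conjugate into $G_{n-1}$, which carries a shorter acylindrical Magnus hierarchy, and the inductive hypothesis produces the desired hierarchy for $H$ directly. Otherwise $H$ acts non-trivially on $T$; if $H$ is not virtually cyclic, Corollary~\ref{cor:main}(b) applies via facts (i) and (ii), yielding a non-elementary finite-height splitting of $H$ with finitely generated edge stabilisers. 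The vertex stabilisers of this new splitting are finitely generated by Lemma~\ref{lem:bieri} and, via the $G$-equivariance of the Dunwoody resolution's epimorphism onto the minimal $H$-subtree of $T$, are contained in conjugates of $G_{n-1}$; applying the inductive hypothesis to each and splicing the resulting hierarchies below the new top-level splitting completes the step.

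For the ``in particular'' equivalence, the reverse direction is immediate: a non-trivial finite-height splitting of $H$ with finitely generated edge stabilisers is acylindrical by Lemma~\ref{lem:FHtoAA}, and for non-virtually-cyclic $H$ is non-elementary by Theorem~\ref{Thm:categorisedActions}. The forward direction follows from the hierarchy construction itself: if $H$ is acylindrically arboreal then it is not virtually cyclic, so at the topmost level of the induction where $H$ acts non-trivially on a Bass--Serre tree the argument produces exactly the required non-trivial finite-height splitting with finitely generated edge stabilisers.

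The principal obstacle I expect is ensuring the recursion genuinely closes, namely that the vertex stabilisers produced by Corollary~\ref{cor:main}(b) really are finitely generated subgroups of conjugates of $G_{n-1}$ so that the inductive hypothesis applies; this requires careful tracking of the equivariant epimorphism from the Dunwoody-resolved tree onto the minimal $H$-subtree of $T$, combined with Lemma~\ref{lem:bieri} and the minimality assertion of Lemma~\ref{lem:minimal}, and a careful treatment of the degenerate virtually cyclic case within the inductive bookkeeping.
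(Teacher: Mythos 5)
Your proposal is correct and follows essentially the same route as the paper: induction on the hierarchy length, with Kurosh's theorem at the base case and Corollary~\ref{cor:main}(b) in the inductive step, fed by Theorem~\ref{thm:Linton} (to upgrade acylindricity to quasi-convexity), Theorem~\ref{thm:QCFH} (to get finite height of the Magnus edge groups), local finite-height of free groups, and coherence of one-relator groups for finite presentability of $H$. The only cosmetic difference is that the paper phrases the ``$H$ conjugate into $G_{n-1}$'' case as a WLOG reduction ($H\not\leq G_{n-1}$) rather than as a separate branch, and establishes local finite-height of the Magnus edge group by direct reference to free groups rather than via Hall's theorem; both are equivalent.
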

\begin{proof}
    First, we observe that by Theorem~\ref{thm:Linton} $G$ is hyperbolic and $G_0\hookrightarrow G_1\hookrightarrow\cdots\hookrightarrow G_n\cong G$ is a quasi-convex hierarchy whose edge stabilisers are free, and therefore locally quasi-convex by Halls' theorem \cite{Hall_1949}. Furthermore, by \cite[Corollary~9]{Howie84} $G$ has virtual cohomological dimension at most two, so there exists some finite index subgroup $G'\leq G$ such that $\operatorname{cd}_\Z(G')\leq 2$. By coherence of one-relator groups\cite[Theorem~1.1]{JaikinLinton25} $H$ will be finitely presented, and $H'=H\cap G'$ is a finite index subgroup of $H$ so must also be finitely presented. It follows by Gersten's theorem \cite[Theorem~5.4]{Gersten96} that $H'$ is hyperbolic, and so $H$ must also be hyperbolic. We are therefore able to apply Corollary~\ref{cor:introStrong} to see that $H$ admits a quasi-convex hierarchy as required. 

    Now assume that $H$ is acylindrically arboreal. It is therefore not virtually cyclic by Theorem~\ref{Thm:categorisedActions}, and so the hierarchy given by Corollary~\ref{cor:introStrong} in the first part of this proof will not be trivial. Taking the first stage of this hierarchy therefore gives the required non-trivial finite-height splitting with finitely generated edge stabilisers.

    Finally, assume $H$ is not virtually cyclic and admits a non-trivial finite-height splitting with finitely generated edge stabilisers. This splitting may be assumed to be minimal with finitely many orbits of edges by Lemma~\ref{lem:minimal}, and $H$ had bounded torsion as a subgroup of the hyperbolic group $G$, so it will be acylindrical by Lemma~\ref{lem:FHtoAA}. This splitting will also not be lineal as $H$ is not virtually cyclic, and so by Theorem~\ref{Thm:categorisedActions} will be non-elementary acylindrical. It follows that $H$ is acylindrically arboreal as required.
\end{proof}

\subsection{Relaxing The Conditions on Stabilisers} \label{sec:Broken}
We finish this paper by demonstrating the danger of attempting to extend these results to more general action. The following is a reformulation of Proposition~\ref{prop:introBadExample}
\begin{proposition}\label{prop:badExample}
     There exists a finitely presented group $G$ such that:
    \begin{enumerate}
        \item $G$ acts acylindrically and non-elementarily on a tree $T$ with infinitely generated edge stabilisers; and
        \item if $T'$ is a tree on which $G$ acts acylindrically with $T'$ dominating $T$ then the action of $G$ on $T'$ has at least one infinitely generated edge stabiliser.
    \end{enumerate}
\end{proposition}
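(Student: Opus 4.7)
The plan is to construct a specific finitely presented group $G$ with an acylindrical non-elementary action on a tree $T$ satisfying (1), then prove (2) by a structural argument on the induced action of the infinitely generated edge stabilisers on any dominating $T'$.

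For the construction I would build $G$ as the fundamental group of a finite graph of groups in which at least one edge group is infinitely generated. By Lemma~\ref{lem:bieri}, combined with the standard result that a graph-of-groups decomposition of a finitely presented group with finitely presented vertex groups and finite quotient graph must have finitely generated edge groups, such a construction necessarily involves at least one vertex group that is not finitely presented. A natural recipe is to assemble $G$ from Baumslag--Solitar-like building blocks housing infinitely generated abelian subgroups such as $\mathbb{Z}[1/2]$, amalgamating or HNN-extending along these, and arranging (possibly via a Higman-style embedding) that the resulting $G$ is finitely presented. The edge group $G_e$ is embedded almost-malnormally into its neighbouring vertex groups so that pointwise stabilisers of long edge paths in $T$ are finite, yielding acylindricity; non-elementarity follows from the existence of hyperbolic isometries of $T$ with distinct axes, together with $G$ not being virtually cyclic.

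For (2), let $T'$ be an acylindrical $G$-tree dominating $T$ via $\phi \colon T' \to T$, and suppose for contradiction that every edge stabiliser of this action is finitely generated. By Lemma~\ref{lem:bieri} applied to the $G$-action on the minimal subtree of $T'$, every vertex stabiliser of this action is also finitely generated. Fix $e \in E(T)$ with infinitely generated stabiliser $G_e$, and let $W_e \subseteq T'$ be the subtree given by the edges in $\phi^{-1}(e)$ together with the subtrees $\phi^{-1}(v_1)$, $\phi^{-1}(v_2)$ over the endpoints of $e$. The acylindrical action of $G$ on $T'$ restricts to an acylindrical action of $G_e$ on $W_e$, so by Theorem~\ref{Thm:categorisedActions} this action is elliptic, lineal, or non-elementary. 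The lineal case is immediately excluded since $G_e$ is not virtually cyclic. In the elliptic case, the subtree of $W_e$ fixed by $G_e$ has diameter less than the acylindricity constant (else $G_e$ would be finite by acylindricity), so $G_e$ is contained in a single vertex stabiliser of $T'$, which is finitely generated. In the non-elementary case, $G_e$ is itself acylindrically arboreal. Both remaining cases are to be excluded by structural features engineered into $G_e$: the elliptic case via ensuring that $G_e$ does not embed in any finitely generated subgroup of $G$ which could arise as a vertex stabiliser of an acylindrical refinement of $T$, and the non-elementary case via Lemma~\ref{lem:AAnormal} by arranging $G_e$ to contain a non-trivial infinite abelian normal subgroup.

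The main obstacle is the construction itself, and in particular the need to engineer two competing rigidity features on $G_e$: to rule out the non-elementary case, $G_e$ should have a large infinite abelian normal subgroup (pushing it toward being abelian), whereas to rule out the elliptic case, $G_e$ should not embed into any finitely generated subgroup of $G$ that could arise as a vertex stabiliser in an acylindrical refinement (pushing it toward being large and complicated, since for instance $\mathbb{Z}[1/2]$ itself embeds in the finitely generated group $BS(1,2)$). Reconciling these requirements while keeping $G$ finitely presented and $T$ acylindrical is the delicate heart of the argument.
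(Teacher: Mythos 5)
Your outline for part~(2) correctly identifies the two key tools --- the trichotomy of Theorem~\ref{Thm:categorisedActions} and the normal-subgroup obstruction of Lemma~\ref{lem:AAnormal} --- but the strategy of applying them to an infinitely generated \emph{edge} group $G_e$ runs into exactly the wall you describe at the end, and you do not resolve it. The obstacle is genuine: if $G_e$ acts elliptically on $T'$ and fixes a vertex $\tilde v$, then all you can say is $G_e \leq \Ps_G(\tilde v) \leq \Ps_G(\phi(\tilde v))$, and $\phi(\tilde v)$ is one of the two endpoints of $e$, so $\Ps_G(\phi(\tilde v))$ strictly contains $G_e$. The intermediate group $\Ps_G(\tilde v)$ can perfectly well be a finitely generated subgroup sitting strictly between $G_e$ and the vertex stabiliser (an infinitely generated group can embed in a finitely generated one, as your own $\mathbb{Z}[1/2] \hookrightarrow BS(1,2)$ example shows), so there is no contradiction to extract from the elliptic case, and ``ensuring $G_e$ does not embed in any finitely generated subgroup that could arise as a vertex stabiliser'' is not a condition you can hope to engineer.

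The fix, which is the heart of the paper's argument, is to apply the trichotomy not to an edge stabiliser but to a full \emph{vertex} stabiliser $V = \Ps_G(v)$ of $T$, chosen so that $V$ is infinitely generated, not virtually cyclic, and has a normal infinite cyclic (in fact central) subgroup. One then \emph{accepts} the elliptic conclusion: $V$ fixes some $\tilde v$ in $T'$, and equivariance gives $V \leq \Ps_G(\phi(\tilde v))$. Since $V$ is a maximal vertex stabiliser and is big relative to the incident edge groups, the only vertex of $T$ fixed by $V$ is $v$ itself, so $\phi(\tilde v) = v$; hence $V \leq \Ps_G(\tilde v) \leq \Ps_G(v) = V$ is an equality, $\Ps_G(\tilde v)$ is infinitely generated, and Lemma~\ref{lem:bieri} finishes. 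The construction is also made concrete rather than left to a Higman-style gesture: start with an infinitely generated malnormal subgroup $H_0 \leq F_2$, form the centralising HNN extension $G_1 = \langle F_2, t_0 \mid [t_0, H_0]\rangle$ so that $H_1 = \langle H_0, t_0 \rangle \cong \mathbb{F}_\mathbb{N}\times\Z$ is malnormal in $G_1$, embed $G_1$ malnormally in a finitely presented $G_2$ via Wagner's theorem, and set $G = \langle G_2, t \mid [t, t_0]\rangle$; folding gives $G = G_2 *_{H_1}\bigl(H_1 *_{\langle t_0\rangle}\Z^2\bigr)$, whose right-hand vertex group is the desired $V$ with central $\langle t_0 \rangle$. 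Malnormality of $H_1$ in $G_2$ gives $(3,1)$-acylindricity. So the two ``competing rigidity features'' you correctly sense are needed should be imposed on a vertex group, not an edge group, and the elliptic case should be exploited rather than excluded.
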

\begin{proof}
Let $G_0 = F(a, b)$ be the free group on generators $a$ and $b$, and let $H_0$ be an infinitely generated malnormal subgroup of $G_0$, for example we may choose $H_0$ to be the subgroup $H$ from Example~\ref{Ex:F3}. Let $G_1$ be the trivial HNN-extension of $G_0$ along $H_0$ with stable letter $t_0$, so $G_1=\langle G_0, t_0\mid [t_0, H_0]\rangle$, and let $H_1=\langle H_0, t_0\rangle\leq G_1$. Thus $H_1\cong \mathbb{F}_\mathbb{N}\times \Z$.

We claim that $H_1\leq G_1$ is malnormal. Indeed, consider the action of $G_1$ on the Bass--Serre tree $T_1$ of the splitting $G_0*_{H_0}H_1$, and let $g\in G_1$. Then either $g\in H_1$ or $gH_1$ is a separate coset of $H_1$ in $G_1$, so represents a different vertex of $T_1$. Assume that we are in the latter case. Then $gH_1g^{-1}\cap H_1$ stabilises the path between $H_1$ and $gH_1$ in $T_1$, which passes through a vertex labelled with some coset $f G_0$ of $G_0$. It follows that $gH_1g^{-1}\cap H_1$ is contained in the intersection of stabilisers of two distinct edges leaving $f G_0$, which must be trivial by the malnormality of $H_0$ in $G_0$. Thus $gH_1g^{-1}\cap H_1$ is trivial, and $H_1$ is malnormal as required.

The group $G_1$ is recursively presented, so there exists a finitely presented group $G_2$ such that $G_1\leq G_2$ is a malnormal subgroup by a result of Wagner \cite[Theorem~1.1]{Wagner24}. Define $G$ to be the trivial HNN-extension of $G_2$ along $\langle t_0\rangle$, so $G=\langle G_2, t\mid [t, t_0]\rangle$.

By construction, $G$ splits as $G_2*_{\langle t_0\rangle}\Z^2$, so in particular is finitely presented. Furthermore, we may fold this splitting to give the amalgam decomposition
\[G=G_2*_{H_1} (H_1*_{\langle t_0\rangle}\Z^2),\]
and define $T$ to be the Bass--Serre tree of this splitting.
The subgroup $H_1$ is malnormal in $G_1$, which itself is malnormal in $G_2$, so it follows that $H_1$ is malnormal in $G_2$ and this splitting is (3, 1)-acylindrical. Since $G$ is clearly not virtually cyclic and this splitting is non-trivial by construction it follows that the action of $G$ on $T$ is non-elementary.

Assume now that $T'$ is a tree on which $G$ acts acylindrically that dominates $T$ via the $G$-equivariant epimorphism
\[\phi\colon T'\rightarrow T.\]
Let $v$ be the unique vertex of $T$ whose stabiliser is $(H_1*_{\langle t_0\rangle}\Z^2)$, and let $\overline{\phi^{-1}(v)}$ be the convex hull of the pre-image set of $v$ in $T'$, which will be a subtree of $T'$. The group $(H_1*_{\langle t_0\rangle}\Z^2)$ acts on $\overline{\phi^{-1}(v)}$ acylindrically by assumption, and the fact that $\langle t_0\rangle\cong \Z$ is normal in both $H_1$ and $\Z^2$ (and hence in $(H_1*_{\langle t_0\rangle}\Z^2)$) implies that $(H_1*_{\langle t_0\rangle}\Z^2)$ is not acylindrically arboreal by Lemma~\ref{lem:AAnormal}, so the action of $(H_1*_{\langle t_0\rangle}\Z^2)$ on $\overline{\phi^{-1}(v)}$ must be elementary. Indeed, given that $(H_1*_{\langle t_0\rangle}\Z^2)$ is not virtually cyclic the action must be elliptic, and there exists some vertex $\Tilde{v}\in \overline{\phi^{-1}(v)}$ fixed by the action of $(H_1*_{\langle t_0\rangle}\Z^2)$.

We will show that $\Ps_G(\Tilde{v})=(H_1*_{\langle t_0\rangle}\Z^2)$, and thus that at least one vertex stabiliser of the action of $G$ on $T'$ must be infinitely generated. If $g\in (H_1*_{\langle t_0\rangle}\Z^2)$ then
\[g\cdot\phi(\Tilde{v})=\phi(g\cdot\Tilde{v})=\phi(\Tilde{v}),\]
so $(H_1*_{\langle t_0\rangle}\Z^2)\leq \Ps_G(\phi(\Tilde{v}))$. By definition of $T$ this will hold only when $\phi(\Tilde{v})=v$. It follows that $(H_1*_{\langle t_0\rangle}\Z^2)\leq\Ps_G(\Tilde{v})\leq (H_1*_{\langle t_0\rangle}\Z^2)$, so $\Ps_G(\Tilde{v})= (H_1*_{\langle t_0\rangle}\Z^2)$  as required, and that at least one vertex stabiliser of the action of $G$ on $T'$ is infinitely generated. The result then follows by Lemma~\ref{lem:bieri}.
\end{proof}
\begin{corollary}
    The Dunwoody--Sageev resolution applied to the acylindrical splitting of the group $G$ above will not preserve acylindricity.
\end{corollary}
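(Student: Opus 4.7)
The plan is that this corollary is essentially an immediate consequence of Proposition~\ref{prop:badExample} combined with Theorem~\ref{thm:dunwoody}, so the proof should take only a few lines and proceed by contradiction. The finitely presented group $G$ and its acylindrical action on $T$ constructed in Proposition~\ref{prop:badExample} serve as the concrete input to the Dunwoody construction, and the output must then be compared against the obstruction provided by clause (2) of that same proposition.

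First I would apply Theorem~\ref{thm:dunwoody} to the action of $G$ on $T$. Since $G$ is finitely presented (this is ensured in the construction of $G$ in the proof of Proposition~\ref{prop:badExample}, as $G_2$ is finitely presented and $G$ is obtained from $G_2$ by a single HNN-extension over a cyclic subgroup), the theorem provides a $G$-tree $T'$ on which $G$ acts with finitely generated edge stabilisers together with a $G$-equivariant graph epimorphism $\phi\colon T'\to T$. By definition, this means that the action of $G$ on $T'$ dominates the action of $G$ on $T$.

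Next I would assume for contradiction that the action of $G$ on $T'$ is acylindrical. Then the hypotheses of Proposition~\ref{prop:badExample}(2) are satisfied by $T'$, so there must exist at least one edge of $T'$ whose $G$-stabiliser is infinitely generated. This directly contradicts the conclusion of Theorem~\ref{thm:dunwoody} that all edge stabilisers of the action of $G$ on $T'$ are finitely generated, completing the proof.

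There is no genuine obstacle here: the work of the corollary has been done entirely inside Proposition~\ref{prop:badExample}, which already combines the malnormality argument, Lemma~\ref{lem:AAnormal}, and Lemma~\ref{lem:bieri} to rule out any acylindrical dominating tree with finitely generated edge stabilisers. The only point requiring mild care is noting that $G$ is finitely presented, which is needed to legitimately invoke Theorem~\ref{thm:dunwoody}; this is immediate from the construction of $G$ as an HNN-extension of the finitely presented group $G_2$ along an infinite cyclic subgroup.
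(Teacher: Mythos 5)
Your proof is correct and takes essentially the same approach as the paper, which omits the argument as immediate: combine Theorem~\ref{thm:dunwoody} (finitely generated edge stabilisers for the Dunwoody resolution) with Proposition~\ref{prop:badExample}(2) (any acylindrical dominating tree has an infinitely generated edge stabiliser) to derive the contradiction. Your remark that the finite presentability of $G$ is what licenses the appeal to Theorem~\ref{thm:dunwoody} is exactly the right point of care.
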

The example we construct here is not hyperbolic, and we are unaware of such an example which is, although we believe one is likely to exist. It is also the case that the group $G$ constructed in the proof of Proposition~\ref{prop:badExample} does have a non-elementary action on a tree with finitely generated edge stabilisers, given by the amalgam decomposition
\[ G\cong G_2*_{G_1}\left(G_1*_{\langle t_0\rangle}\Z^2\right),\]
which is acylindrical due to the malnormality of $G_1$ in $G_2$. We do however believe that there will be an example of a finitely presented group that acts acylindrically on a tree but not with finitely generated edge stabilisers, and finish this paper by restating Question~\ref{q:mine}.

\begin{question}
    Does every acylindrically arboreal finitely presented group admit a non-elementary acylindrical action on a simplicial tree with finitely generated edge stabilisers?
\end{question}

\printbibliography
 \end{document}